\journal{Elsevier}
\newtheorem{definition}{Definition}[section]
\newtheorem{proposition}[definition]{Proposition}
\newtheorem{lemma}[definition]{Lemma}
\newtheorem{theorem}[definition]{Theorem}
\newtheorem{corollary}[definition]{Corollary}
\newdefinition{example}[definition]{Example}
\newdefinition{remark}[definition]{ Remark}
\newdefinition{problem}[definition]{ Problem}
\newdefinition{question}[definition]{Question}
\newdefinition{fact}[definition]{Fact}
\newproof{pot}{Proof}
\begin{document}

\begin{frontmatter}
\title{Hyperspaces $C(p,X)$ of finite graphs}

\author{Florencio~Corona-V\'azquez }
\ead{florencio.corona@unach.mx}

\author{Russell~Aar\'on~Qui\~nones-Estrella}
\ead{rusell.quinones@unach.mx}

\author{Javier~S\'anchez-Mart\'inez \corref{cor1}}
\ead{jsanchezm@unach.mx}

\author{Hugo~Villanueva}
\ead{hugo.villanueva@unach.mx}

\cortext[cor1]{Corresponding author}

\address{Universidad Aut\'onoma de Chiapas, Facultad de Ciencias en F\'isica y Matem\'aticas, Carretera Emiliano Zapata Km. 8, Rancho San Francisco, Ter\'an, C.P. 29050, Tuxtla Guti\'errez, Chiapas, M\'exico.}

\begin{abstract}
Given a continuum $X$ and $p\in X$, we will consider the hyperspace $C(p,X)$ of all subcontinua of $X$ containing $p$ and the family $K(X)$ of all hyperspaces $C(q,X)$, where $q\in X$. In this paper we give some conditions on the points $p,q\in X$ to guarantee that $C(p,X)$ and $C(q,X)$ are homeomorphic, for finite graphs $X$. Also, we study the relationship between the homogeneity degree of a finite graph $X$ and the number of topologically distinct spaces in $K(X)$, called the size of $K(X)$. In addition, we construct for each positive integer $n$, a finite graph $X_n$ such that $K(X_n)$ has size $n$, and we present a theorem that allows to construct finite graphs $X$ with a degree of homogeneity different from the size of the family $K(X)$.
\end{abstract}

\begin{keyword}
Continua \sep  finite graphs \sep hyperspaces.
\MSC Primary \sep 54B20 %hyperspaces
\sep 54B05 %subspaces 
\sep 54F65 %Topological characterizations of particular spaces 
\end{keyword}

\end{frontmatter}
\section{Introduction}
A \textit{continuum} is a nonempty compact connected metric space. Given a continuum $X$, by a \emph{hyperspace} of $X$ we mean a specified collection of subsets of $X$ endowed with the Hausdorff metric (see Theorem \ref{metrica}).   In the literature, some of the most studied hyperspaces are the following:   
\begin{align*}
2^{X} & = \{ A \subseteq X : A \text{ is nonempty and closed}\},        \\
C(X) & = \{ A \subseteq X : A \text{ is nonempty, connected and closed}\}.
\end{align*}

$2^{X}$ is called the \textit{hyperspace of closed subsets} of $X$ whereas that $C(X)$ is called the \textit{hyperspace of subcontinua} of $X$. These have been amply studied to characterize topological properties of $X$ through them and vice versa. The readers are referred to \cite{Illanes(1999)} for more information about this topic.

For the purpose of this paper, given a continuum $X$, $A\in C(X)$ and a point $p$ of $X$, we focus our attention also to the following hyperspaces:
\begin{align*}
C(A,X) & = \{B\in C(X):A\subset B\} , \\
C(p,X) & = \{ A \in C(X) :p\in A \}, \\
K(X) & = \{C(x,X):x\in X\}.
\end{align*}

The topological structure of the hyperspaces $C(A,X)$ and $C(p,X)$ have been recently studied, for example, in \cite{Eberhart(1978)}, \cite{Martinez(2007)}, \cite{Pellicer(2003)}, \cite{Pellicer(2005)}, and \cite{Pellicer(2005b)}. In \cite{Pellicer(2003)} P. Pellicer gives characterizations of the class of continua $X$ for which $K(X)$ coincides with $K(I)$ or $K(S)$, where $I$ denotes the unit interval and $S$ is a simple closed curve. As a useful tool to characterize hyperspaces of the form $K(X)$, is distinguishing the structure of the hyperspaces $C(p,X)$ where $p\in X$, see for example \cite{Pellicer(2007)},   in this way, we consider in $K(X)$ the following natural equivalence relation: $C(p,X)\sim C(q,X)$ if and only if $C(p,X)$ is homeomorphic to $C(q,X)$. Given a positive integer $n$ and a continuum $X$, we say that \textit{$K(X)$ has size $n$} if the quotient $K(X)/\sim$ has cardinality $n$. In this paper we study this relation in the class of finite graphs as well as the size of $K(X)$. We show that $K(X)$ having size $n$ is not equivalent to $X$ being $\frac{1}{n}$-homogeneous. We present, for each positive integer $n$, a finite graph $X_n$ such that $K(X_n)$ has size $n$, we also present in Theorem \ref{pseudosymmetric} a way to construct continua $X$ with a degree of homogeneity different from the size of $K(X)$. The homogeneity degree of continua has been studied extensively, see for example \cite{LopezPellicerSantiago(2010)}, \cite{NeumannPellicerPuga(2006)}, \cite{Pellicer(2008)} and \cite{SantiagoTapia(2018)}, being a useful property to characterize classes of continua. Regarding \cite[Corollary 3.12, p. 1006]{Pellicer(2007)}, in Theorem \ref{maintheorem}, we present a similar result in the class of finite graphs, both results aims to find cells in $C(p,X)$.

\section{Preliminaries}

Let $X$ be a continuum with metric $d$. Given $\varepsilon>0$ and $p\in X$ we denote as customary $B_{\varepsilon}(p)=\{x\in X:d(x,p)<\varepsilon\}$ and if $A\subset X$ 
\begin{center}
$N(\varepsilon,A)=\left\{x\in X:\textrm{there exists $y\in A$ such that }x\in B_{\varepsilon}(y)\right\}$.
\end{center} 
If $A$ and $B$ are two closed subsets of $X$, remember that the \textit{Hausdorff distance} between $A$ and $B$ is given by:
\begin{center}
$H(A,B)=\inf\{\varepsilon>0: A\subset N(\varepsilon,B)$ and $ B\subset N(\varepsilon,A)\}$.
\end{center} 

\begin{theorem}\label{metrica}\cite[Theorem 2.2, p. 11]{Illanes(1999)} 
If $(X,d)$ is a metric compact space, then $H$ is a metric for $2^{X}$.
\end{theorem}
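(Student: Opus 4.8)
The plan is to verify the three metric axioms for $H$ on $2^X$, together with the fact that $H$ takes finite values, using the compactness of $X$ and the closedness of the members of $2^X$. First I would observe that since $X$ is compact it is bounded, say $d(x,y)\le M$ for all $x,y\in X$; then for any nonempty closed $A,B$ and any $\varepsilon>M$ we have $A\subset N(\varepsilon,B)$ and $B\subset N(\varepsilon,A)$, so the set whose infimum defines $H(A,B)$ is nonempty and $H(A,B)\le M<\infty$. Non-negativity and symmetry are immediate: the restriction $\varepsilon>0$ in the definition forces $H\ge 0$, and the defining condition is symmetric in $A$ and $B$, so $H(A,B)=H(B,A)$.

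Next I would establish that $H(A,B)=0$ if and only if $A=B$. One direction is trivial, since $A\subset N(\varepsilon,A)$ for every $\varepsilon>0$ gives $H(A,A)=0$. For the converse, suppose $H(A,B)=0$. Then for every $\varepsilon>0$ we have $A\subset N(\varepsilon,B)$, so each $a\in A$ lies within distance $\varepsilon$ of some point of $B$ for arbitrarily small $\varepsilon$; hence $a$ belongs to the closure of $B$, and since $B$ is closed, $a\in B$. Thus $A\subset B$, and the symmetric argument gives $B\subset A$, so $A=B$. This is precisely the step where closedness of the elements of $2^X$ is essential, and where the axiom would fail if arbitrary subsets were allowed.

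The main work is the triangle inequality $H(A,C)\le H(A,B)+H(B,C)$. The key auxiliary fact I would prove first is the inclusion $N(\varepsilon_1,N(\varepsilon_2,C))\subset N(\varepsilon_1+\varepsilon_2,C)$, which is a direct consequence of the triangle inequality in $(X,d)$: if $a\in B_{\varepsilon_1}(b)$ and $b\in B_{\varepsilon_2}(c)$ for some $c\in C$, then $d(a,c)<\varepsilon_1+\varepsilon_2$. Granting this, fix $\varepsilon_1,\varepsilon_2>0$ witnessing $A\subset N(\varepsilon_1,B)$, $B\subset N(\varepsilon_1,A)$, $B\subset N(\varepsilon_2,C)$ and $C\subset N(\varepsilon_2,B)$. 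Then $A\subset N(\varepsilon_1,B)\subset N(\varepsilon_1,N(\varepsilon_2,C))\subset N(\varepsilon_1+\varepsilon_2,C)$, and symmetrically $C\subset N(\varepsilon_1+\varepsilon_2,A)$, so $\varepsilon_1+\varepsilon_2$ belongs to the defining set for $H(A,C)$. I expect the only delicate point to be the bookkeeping with the infima and the strict-versus-nonstrict inequalities; the natural remedy is to argue that for every $\delta>0$ one may choose $\varepsilon_1<H(A,B)+\delta$ and $\varepsilon_2<H(B,C)+\delta$ realizing the required inclusions, conclude $H(A,C)\le \varepsilon_1+\varepsilon_2<H(A,B)+H(B,C)+2\delta$, and then let $\delta\to 0$ to obtain $H(A,C)\le H(A,B)+H(B,C)$.
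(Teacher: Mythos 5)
Your proof is correct. The paper does not prove this statement at all --- it is quoted verbatim from Illanes--Nadler \cite[Theorem 2.2]{Illanes(1999)} as a known fact --- so there is no in-paper argument to compare against; your verification of the axioms (finiteness from boundedness of the compact $X$, the use of closedness of $A$ and $B$ to get $H(A,B)=0\Rightarrow A=B$, the inclusion $N(\varepsilon_1,N(\varepsilon_2,C))\subset N(\varepsilon_1+\varepsilon_2,C)$ for the triangle inequality, and the $\delta$-bookkeeping with the infima) is the standard and complete one. The only remark worth making is that you use compactness solely to bound $H$; closedness and boundedness of the members of $2^X$ already suffice, so your argument in fact proves slightly more than the stated theorem.
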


Since $2^{X}$ equipped with the metric $H$ is a continuum (cf. \cite[Corollary 14.10, p.114]{Illanes(1999)}), we also consider $H_{2}$ the Hausdorff metric by $2^{2^{X}}$ induced by $H$. The hyperspaces $C(X)$, $C(p,X)$ and $C(A,X)$ are considered as subspaces of $2^{X}$ and $K(X)$ as a subspace of $2^{2^{X}}$. The topological structure of the hyperspace $K(X)$ was studied first by P. Pellicer in \cite{Pellicer(2005)}, the author shows that the hyperspace $K(X)$ is not always a continuum and gives conditions to ensure that $K(X)$ is compact, connected, arcwise connected and locally connected.% It is also presented a characterization of hereditarily indecomposable continua (see \cite[Corollary 7.2, p. 673]{Pellicer(2005)}).

By a \textit{finite graph} we mean a continuum $X$ which can be written as the union of finitely many arcs, any two of which are either disjoint or intersect only in one or both of their end points. Given a positive integer $n$,  a \textit{simple $n$-od} is a finite graph, denoted by $T_n$, which is the union of $n$ arcs emanating from a single point, $v$, and  otherwise disjoint from each another. The point $v$ is called the \textit{vertex} of the simple $n$-od. A simple $3-od$, $T_{3}$, will be called a \textit{simple triod}. A \textit{tree} is a finite graph without simple closed curves. A $n$\emph{-cell} is any space homeomorphic to $[0,1]^n$.

Given a finite graph $X$, $p\in X$ and a positive integer $n$, we say that $p$ \textit{is of order $n$ in $X$}, denoted by 
$ord(p,X)=n$, if $p$ has a closed neighborhood which is homeomorphic to a simple $n$-od having $p$ as the vertex. 
%%%%%%%%%%%%%%%%%%%%%%%%%%%%%%%Javier - AAron %%%%%%%%%%%%%%
The points of order $1$, $2$ or $\geq 3$ are called \emph{end points, ordinary points} and \emph{ramification points} and denoted by $E(X)$, $O(X)$ and $R(X)$, respectively. Also define \emph{vertices} as the points in $V(X):=E(X)\cup R(X)$. An \emph{edge} $J$ is any arc joining two points $p,q\in V(X)$ ($p=q$ is allowed) and containing no other vertices, we will write $J=pq$ and $(pq)=J-\{p,q\}$. 

%% %%%%%%%%%%%%%%%%%%%%%%%%%%%%%%%%%%% Javier y Aaron %%%%%%%%%%%%%%%%%%%%%%%%%%
% If $ord(p,X)=1$, then $p$ has a neighborhood which is an arc having $p$ as one of its end points and we will call it an 
% \textit{end point of $X$}. The set of all end points of $X$ will be denoted by $E(X)$. If $ord(p,X)=2$, then $p$ has a neighborhood which is an arc, $p$ is not an  end point of it, and we will call it an \textit{ordinary point of $X$}. The set of all ordinary points of $X$ will be denoted by $O(X)$. A point $p\in X$ is a \textit{ramification point of $X$} if $ord(p,X)\geq 3$. The set of all ramification points of $X$ will be denoted by $R(X)$. The \textit{vertices} of a finite graph $X$ will be the end points and the ramification points of $X$, we denote by $V(X)$ the set of all vertices of $X$. 
% An \textit{edge} will be either an arc joining two vertices of $X$ and having exactly two vertices of $X$ or a simple closed curve having exactly one vertex of $X$. If $J$ is an edge of $X$ and $p,q\in J$, we denote by $pq$ the only arc contained in $J$ with end points $p$ and $q$, and we denote by $(pq)=pq-\{p,q\}$.

Let $A,B\in C(X)$. An \textit{order arc} from $A$ to $B$ is a mapping (i.e. a continuous function) $\alpha :\left[0,1\right]\rightarrow C(X)$ such that $\alpha(0)=A$, $\alpha(1)=B$, and $\alpha(r)\subsetneq \alpha(s)$ whenever $r<s$. Existence and other properties of order arcs if $A\subsetneq B$ can be found in \cite[1.2-1.8]{Nadler(1978)}.
%%%%%%%%%%%%%%%%%%%%%%%%%%%%%%%%%5

Given a positive integer $n$, we say that a space $X$ is \textit{$\frac{1}{n}$-homogeneous} provided that the natural action of the group of homeomorphisms of the space $X$ onto itself has exactly $n$ orbits. In this case we say that \textit{$n$ is the homogeneity degree of $X$}.

%%%%%%%%%%%%%%%%%%%%%%%%%
Given continua $X$ and $Y$, we write $X\approx Y$ if there exists an homeomorphism between $X$ and $Y$.

%(see \cite{Hosokawa(1989)})
For a mapping $f:X\rightarrow Y$ between continua, we consider the \textit{induced mapping by $f$}, $C(f):C(X)\rightarrow C(Y)$,  given by $C(f)(A)=f(A)$. In case that $f$ is a homeomorphism, for each $p\in X$, $\left.C(f)\right|_{C(p,X)}:C(p,X)\rightarrow C(f(p),Y)$ is a homeomorphism (see \cite[Lemma 3.4, p. 262]{Pellicer(2003)}). The following result is easy to prove.

\begin{proposition}\label{equality}
If $X$ is an $\frac{1}{n}$-homogeneous continuum, then $K(X)$ has size $m\leq n$.
\end{proposition}

Concerning Proposition \ref{equality}, the equality $m=n$ does not hold in general, for example if $X$ is a $2$-cell, $X$ is $\frac{1}{2}$-homogeneous, but for each $p\in X$, $C(p,X)$ is a Hilbert cube (\cite[Theorem 4, p. 221]{Eberhart(1978)}). \\

The following is an example of a finite graph $X$, such that $K(X)$ has smaller size than the homogeneity degree of $X$. 

\begin{example}\label{example}
In the Euclidean plane consider $X=S\cup L\cup J$ where:
\begin{itemize}
\item $S=\{(x,y):(x+2)^{2}+y^{2}=1\}$,
\item $L=\{(x,0):x\in \left[-1,1\right]\}$,
\item $J=\{(1,y):y\in\left[-1,1\right]\}$.
\end{itemize}

\[
\begin{tikzpicture}
\draw[thick] (-2,0) circle (1cm);  
\draw[->,dashed, gray!80] (-4,0) -- (2,0);
\draw[->,dashed, gray!80] (0,-2) -- (0,2);
\draw[thick] (-1,0) node {$\bullet$} node[above right]{$p$} -- (1,0) node{$\bullet$} node[above right]{$q$};
\draw node at (-2,-1.2){$S$};
\draw[thick] (1,-1) node[above right]{$J$} -- (1,1);
\draw node at (0,0.2) {$L$};
\end{tikzpicture}
\]

%
%It is easy to see that $X$ has homogeneity degree equal to $6$. We consider the spaces $P=S\cup L$ and $T=L\cup J$ and let $p=(-1,0)$ and $q=(1,0)$. Since $C(S)$ is a $2$-cell (see \cite[4.29]{Nadler(1992)}), we observe that 
%\begin{itemize}
%\item $C(p,P)\approx [0,1]^3\approx C(q,T)$, 
%\item $C(L,P)\approx [0,1]^2\approx C(L,T)$ and 
%\item $\{L\}=C(p,P)\cap C(q,T)$, 
%\end{itemize}
%there exists a homeomorphism $f:C(p,P)\rightarrow C(q,T)$ such that $f(C(L,P))=C(L,T)$ and $f(L)=L$.
%
%We define $g:C(L,X)\rightarrow C(L,P)\times C(L,T) $ as $g(A)= (A\cap P, A\cap T)$. It is easy to see that $g$ is a homeomorphism with inverse $g^{-1}(A,B)=A\cup B \supset L$. 
%
%Let $h:C(p,X)\rightarrow C(q,X)$ be defined as 
%\[
%h(A) =\begin{cases}
%f(A), & \text{if  } A\in C(p,P); \\
%g^{-1}(f^{-1}(A\cap T), f(A\cap P)), & \text{if  } A\in C(L,X).
%\end{cases}
%\]
%Note that if $A\in C(p,P)\cap C(L,X)$ then $A\in C(L,P)$, thus $A\cap P=A$ and $A\cap T=L$, hence 
%\begin{align*}
%g^{-1}(f^{-1}(A\cap T), f(A\cap P)) & =g^{-1}(f^{-1}(L), f(A))\\
%&=g^{-1}(L,f(A))=L\cup f(A)=f(A).
%\end{align*}
% We conclude that $h$ is a homeomorphism. Observe that $p$ and $q$ are in different orbits of the action of the group of homeomorphisms of $X$ onto itself and therefore
 
In this case, $K(X)$ has size at most 5 (using the results of the following section and Theorem \ref{pseudosymmetric} it can be verified that $K(X)$ has size exactly 5) and it is easy to see that $X$ is $\frac{1}{6}$-homogeneous.
\end{example}

\section{$C(p,X)$ of finite graphs}

In this section we suppose that all finite graphs $X$ have the metric, $d$, given by the arc length, i.e. if $x,y\in X$ the distance in $X$ from $x$ to $y$ will be the length of a shortest path connecting $x$ and $y$ in $X$. We will also suppose that each edge has length equal to $1$. In this case, if $x,y\in X$ belongs to the same edge, we say that $z$ is the midpoint of $xy$ if $d(x,z)=d(z,y)$. 

\begin{lemma}\label{finalesordinarios}
Let $X$ be a finite graph, $p\in X$ and $A\in C(p,X)$.
\begin{itemize}
\item[(i)] If $p\in E(X)$, then there exists an arc $L\subset C(p,X)$ such that $A\in L$.
\item[(ii)] If $p\in O(X)$, then there exists a $2$-cell $\mathcal{A}\subset C(p,X)$ such that $A\in \mathcal{A}$.
\end{itemize}
\end{lemma}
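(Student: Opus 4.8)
The plan is to produce the required cells through $A$ by exhibiting explicit parametrized families of subcontinua of $X$ that contain $p$. For (i) the natural tool is an order arc. If $A\neq\{p\}$, then $\{p\}\subsetneq A$ and there is an order arc $\alpha\colon[0,1]\to C(X)$ with $\alpha(0)=\{p\}$ and $\alpha(1)=A$; since every $\alpha(r)$ satisfies $\{p\}\subseteq\alpha(r)\subseteq A$, each contains $p$, so $L:=\alpha([0,1])$ is an arc contained in $C(p,X)$ with $A=\alpha(1)\in L$. If $A=\{p\}$, I instead use that $p\in E(X)$ provides an edge $e=pw$ and take an order arc from $\{p\}$ to $e$; its image is again an arc in $C(p,X)$ and contains $A=\{p\}$ as an endpoint. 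In fact this part needs nothing about the order of $p$; the endpoint hypothesis only matters for why an arc, rather than a higher cell, is the best one can assert here.

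For (ii) I would build a genuine two-parameter family. Since $p$ is ordinary ($\mathrm{ord}(p,X)=2$), choose $\delta>0$ so small that $p$ has an arc neighborhood $N=[p_-,p_+]$ lying in the interior of a single edge $e=vw$, with $p$ interior to $N$ and no vertices in $N$; identify $N$ with $[-\delta,\delta]$ and $p$ with $0$, and write $A\cap e=[c_-,c_+]$ with $c_-\in[v,p]$ and $c_+\in[p,w]$. The idea is to move the two endpoints $c_-$ and $c_+$ independently in small intervals around their current positions while leaving the rest of $A$ unchanged: let $A_0$ be the part of $A$ lying outside the open edge $(vw)$ and set $A_{s,t}=A_0\cup[s,t]$, where $[s,t]\subseteq e$ is the subarc with $s\le 0\le t$ and $(s,t)$ ranges over $I_-\times I_+$ for suitable nondegenerate intervals $I_-\ni c_-$ and $I_+\ni c_+$. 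Each $A_{s,t}$ is closed and contains $p=0\in[s,t]$, and the map $(s,t)\mapsto A_{s,t}$ is continuous for the Hausdorff metric and injective, since $s$ and $t$ are recovered from $A_{s,t}\cap e$; hence its image is a $2$-cell containing $A=A_{c_-,c_+}$.

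The main obstacle is connectivity: replacing $[c_-,c_+]$ by $[s,t]$ keeps $A_{s,t}$ connected only when each modified endpoint is a genuine free end of $A$. This fails precisely when $A$ reaches a vertex of $e$ and continues into the graph there, say $c_-=v$ with $A$ meeting some edge other than $e$ at $v$, because then shrinking past $v$ disconnects the part of $A$ beyond $v$, while growing toward $v$ leaves $A$ unchanged, so the corresponding parameter degenerates. I would therefore organize the argument by cases according to whether $A$ continues past $v$ and past $w$. If neither happens, then $A\subseteq e$ and the construction above applies verbatim. If $A$ continues past one or both vertices, the blocked local direction is replaced by a \emph{remote} free parameter: the portion of $A$ beyond the vertex is a nondegenerate finite graph, hence contains either an endpoint of $A$ (a leaf, where $A$ may be shrunk, or grown if $X$ continues there) or a simple closed curve (from which a small open gap may be deleted while preserving connectedness), and any such modification can be carried out in a region disjoint from a neighborhood of $p$ and from the other parameter. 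Performing the two independent modifications simultaneously, $(s,t)\mapsto(\text{move }1)\cup(\text{move }2)$, again yields a continuous injection of a square whose image is a $2$-cell through $A$ inside $C(p,X)$. The most delicate part, and the bulk of the work, is verifying in each case that the two chosen moves are independent, retain $p$, and preserve connectedness, so that the image is an honest $2$-cell rather than a degenerate set.
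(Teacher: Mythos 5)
Part (i) is correct and is essentially the paper's argument (the paper simply takes one order arc from $\{p\}$ to $X$ passing through $A$, which also disposes of your special case $A=\{p\}$). The problem is in part (ii). Your construction works when at least one of the two local ends of $A$ at $p$ is free, but when both ends are blocked your menu of ``remote'' moves --- shrink or grow $A$ at a leaf of $A$, or delete a small gap from a simple closed curve contained in $A$ --- does not always supply \emph{two} independent deformations. Concretely, let $X$ be a theta-curve with edges $e,e',e''$ joining its two ramification points $v$ and $w$, let $p$ be an interior point of $e$ (so $p\in O(X)$), and let $A=e\cup e'$. Then $A$ is a simple closed curve: it has no leaves, so the only available move is to delete one gap from $A$ away from $p$; a second independent gap is impossible because a circle minus two disjoint open arcs is disconnected, so deleting a gap in $e$ and a gap in $e'$ simultaneously destroys connectedness. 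The recipe as stated therefore produces only a one-parameter family through this $A$. There are smaller issues of the same kind: $A\cap e$ need not be an arc $[c_-,c_+]$ (in the theta-curve $A$ can re-enter $e$ from both far ends, and then $A_0\cup[c_-,c_+]\neq A$, so your square misses $A$), and the cases $v=w$ (a loop edge) and $X$ itself a simple closed curve (no vertices, hence no edge $vw$ to work with) are not addressed.

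The missing idea, which is exactly how the paper handles the hard configuration, is to make a \emph{single} cut at an ordinary point of $A$ far from $p$ and restore the two sides of that one cut independently: if $A-\{p\}$ is connected and $p\in O(A)$, then $A\supseteq e$, and removing a small closed subarc $ab$ around the midpoint $m$ of $pw$ leaves a continuum $C$ containing $p$; the order arcs from $C$ to $C\cup am$ and from $C$ to $C\cup mb$ give two independent parameters with $A=C\cup am\cup mb$ recovered at the corner $(1,1)$. (Your scheme could instead be repaired by adding the option of growing $A$ into $X\setminus A$ at a ramification point, which exists here since $\mathrm{ord}(v,X)\geq 3>\mathrm{ord}(v,A)$, but that still requires a separate argument when $X=A$.) The paper's case split --- $A-\{p\}$ disconnected, $p$ an end point of $A$, or $p$ an ordinary point of $A$ --- is organized around $A$ rather than around the two vertices of $e$, and in each case a pair of order arcs does all the work, so the independence and connectedness checks you defer essentially disappear. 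I recommend reorganizing part (ii) along those lines.
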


\begin{proof}
In order to prove $(i)$ take an order arc  $L$ in $C(p,X)$ from $\{p\}$ to $X$ through $A$ (see \cite[Theorem 14.6, p. 112]{Illanes(1999)}).

Let $p\in O(X)$. If $X$ is a simple closed curve it is easy to see that $C(p,X)$ is homeomorphic to an $2$-cell. So we  can suppose that $X$ is not a simple closed curve.  We consider then two cases:
\begin{enumerate}
\item If $A-\{p\}$ is not connected, let $C_1$, $C_2$ be the components of $A-\{p\}$. For $i\in\{1,2\}$, let $\alpha_i :[0,1]\to C(p,X) $ be an order arc from $\{p\}$ to $C_i\cup \{p\}$ and define $h:[0,1]\times [0,1]\to C(p,X)  $ as $h(s,t)=\alpha_1(s)\cup\alpha_2(t)$. Note that $h$ is a embedding. Thus $\mathcal{A}=h([0,1]\times [0,1])$ is a $2$-cell contained in $C(p,X)$ and $h(1,1)=A$.

\item If $A-\{p\}$ is connected, let $J=vw$ be the edge of $X$ containing $p$, where $v$ and $w$ are vertices of $X$. 

In case that $p$ is an end point of $A$, we can assume, without loss of generality, that there exists an arc $L\subset pw$ such that $L\cap A=\{p\}$. Let $\alpha,\beta:[0,1]\to C(p,X)$ be order arcs from $\{p\}$ to $A$, and from $\{p\}$ to $L$, respectively. Define $h:[0,1]\times [0,1]\to C(p,X)$ as $h(s,t)=\alpha(s)\cup \beta(t)$ and we conclude as above since $h(1,0)=A$.

If $p\in O(A)$,  let $m$ be the midpoint of $pw$ and let $a,b$ be the midpoints of $pm$ and $mw$, respectively. Since $A-\{p\}$ is connected, $A-ab$ is connected and contains $p$. Let $C$ be the closure in $X$ of $A-ab$. Let $\alpha:[0,1]\to C(p,X)$ be an order arc from $C$ to $C\cup am$ and $\beta:[0,1]\to C(p,X)$ and order arc from $C$ to $C\cup mb$. By defining $h:[0,1]\times\ [0,1]\to C(p,X)$ as $h(s,t)=\alpha(s)\cup \beta(t)$ we obtain the result since $h(1,1)=C\cup am\cup mb=A$.
\end{enumerate}
\end{proof}

Let $k\in \mathbb{N}$. A continuum $Y$ is a \textit{$k$-od} provided there exists $M\in C(Y)$ such that $Y-M$ has at least $k$ components. In this case, we will say that $M$ is a \textit{core} of the $k$-od. It is trivial that any $(k+1)$-od is also an $k$-od. The reader should take care that even in the case of finite graphs, for a point $p$ in $X$, there is subtile difference between $ord(p,X)=k$ and $p$ belongs to the core of a $k$-od in $X$, for example, each point in a simple closed curve has order equal $2$, but the simple closed curve is not a $2$-od. In the Example \ref{example}, let $M$ be
\[
M=\{(x,y)\in X : -2\leq x<1\}\cup \{q\}. 
\]
Here $M$ is the core of an $3$-od containing $(0,0)$ and $(0,0)$ has order equal $2$.\\

%%%%%%%%%%%%%%%%%%%%%%%%%%%%%%%%%%%%%%%%%%%%%%%%%%%%%%%%%Javier%%%%%
Related to these concepts we present the following well known result about $k$-cells in $C(p,X)$ (cf. \cite[Corollary 3.12, p. 1006]{Pellicer(2007)}).

\begin{theorem}
Given a continuum $X$ and $p\in X$, $C(p,X)$ contains a $k$-cell if and only if the point $p$ is contained in the core of a $k$-od.
\end{theorem}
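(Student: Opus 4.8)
The plan is to prove the two implications separately; the sufficiency is the constructive one and generalizes the $2$-cell construction inside the proof of Lemma \ref{finalesordinarios}, while the necessity is the delicate direction.

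For sufficiency I would first reformulate the hypothesis into a form tailored to order arcs. I claim that ``$p$ lies in the core of a $k$-od'' is equivalent to the existence of a subcontinuum $M$ of $X$ with $p\in M$ together with subcontinua $A_{1},\dots,A_{k}$, each properly containing $M$, such that $A_{i}\cap A_{j}=M$ whenever $i\neq j$. Indeed, given a $k$-od $Y\subseteq X$ with core $M\ni p$ and components $C_{1},\dots,C_{k}$ of $Y-M$, set $A_{i}=M\cup\overline{C_{i}}$; since each component is closed in $Y-M$ we get $\overline{C_{i}}\subseteq C_{i}\cup M$, so $\overline{C_{i}}\cap\overline{C_{j}}\subseteq M$ for $i\neq j$, while the boundary bumping theorem (see \cite{Illanes(1999)}) gives $\overline{C_{i}}\cap M\neq\emptyset$; thus the $A_{i}$ are as required. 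Conversely, given such $A_{i}$, the sets $A_{i}-M$ are nonempty, pairwise disjoint, and relatively clopen in $\left(\bigcup_{i}A_{i}\right)-M$, so $\bigcup_{i}A_{i}$ is a $k$-od with core $M$. I would record this equivalence as a short preliminary step.

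With the $A_{i}$ at hand, the cell is built directly. For each $i$ choose, by \cite[Theorem 14.6, p. 112]{Illanes(1999)}, an order arc $\alpha_{i}:[0,1]\to C(X)$ from $M$ to $A_{i}$, and define $h:[0,1]^{k}\to C(p,X)$ by $h(t_{1},\dots,t_{k})=\bigcup_{i=1}^{k}\alpha_{i}(t_{i})$. Each $\alpha_{i}(t_{i})$ contains $M\ni p$, so the union is a subcontinuum containing $p$, and $h$ is continuous because the union operation is continuous in the Hausdorff metric. The one point needing care is injectivity, and this is exactly where $A_{i}\cap A_{j}=M$ is used: for $j\neq i$ one has $\alpha_{j}(t_{j})\cap A_{i}\subseteq A_{j}\cap A_{i}=M\subseteq\alpha_{i}(t_{i})$, whence $h(t_{1},\dots,t_{k})\cap A_{i}=\alpha_{i}(t_{i})$. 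Thus each coordinate order arc is recovered from the value of $h$, and since order arcs are strictly increasing the parameters $t_{i}$ are recovered as well; so $h$ is a continuous injection of a compact space, hence an embedding, and $h([0,1]^{k})$ is the desired $k$-cell. This is precisely the mechanism of Lemma \ref{finalesordinarios}(ii), Case $1$, now run in $k$ independent directions.

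The reverse implication is the main obstacle. Starting from an embedding $h:[0,1]^{k}\to C(p,X)$ with image $\mathcal{K}$, I would pass to the subcontinuum $B=\bigcup\mathcal{K}$, which is connected because every member contains $p$, so that $\mathcal{K}$ is a $k$-cell in $C(B)$ and $p\in M_{0}:=\bigcap\mathcal{K}$. The difficulty is that an abstract $k$-cell in $C(B)$ need not be aligned with the inclusion order: the naive ``edges'' $t\mapsto h(0,\dots,t,\dots,0)$ are arcs but not order arcs, so one cannot simply read off a common core together with $k$ enlargements meeting exactly in it. Worse, even after producing some $k$-od one must force $p$ into its core, which is exactly the pointed refinement that the unpointed correspondence between $k$-cells and $k$-ods does not by itself provide. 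My plan is therefore to argue by contraposition: assuming $p$ lies in the core of no $k$-od, every subcontinuum $M\ni p$ admits at most $k-1$ subcontinua properly containing it with pairwise intersection $M$, and I would combine a Whitney map with an order-arc analysis around the members of $\mathcal{K}$ to bound the local dimension of $C(p,X)$ at small continua by $k-1$, contradicting the presence of a $k$-cell. I expect the technical heart to be showing that the $k$ independent directions of the cell force $k$ genuinely independent enlargements of a single core through $p$.
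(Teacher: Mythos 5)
First, a point of reference: the paper does not prove this theorem at all --- it is stated as a known result and attributed to \cite[Corollary 3.12, p. 1006]{Pellicer(2007)}, so there is no internal proof to compare against. Judged on its own terms, your sufficiency argument is complete and correct: the reformulation of ``$p$ lies in the core of a $k$-od'' as the existence of $M\ni p$ with $k$ continua $A_{1},\dots,A_{k}\supsetneq M$ satisfying $A_{i}\cap A_{j}=M$ is the standard characterization (the closure of a component $C_{i}$ of $Y-M$ satisfies $\overline{C_{i}}\cap(Y-M)=C_{i}$ because any set between a connected set and its closure is connected, and boundary bumping gives $\overline{C_{i}}\cap M\neq\emptyset$), and the map $h(t_{1},\dots,t_{k})=\bigcup_{i}\alpha_{i}(t_{i})$ built from order arcs $M\to A_{i}$ is indeed an embedding, with injectivity recovered exactly as you say from $h(\vec{t}\,)\cap A_{i}=\alpha_{i}(t_{i})$. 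This is the same order-arc mechanism the paper uses in Lemma \ref{finalesordinarios} and in the proof of Theorem \ref{maintheorem}, run in $k$ independent directions.

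The necessity direction, however, is not proved; it is only a declared intention. You correctly identify the two obstacles (an abstract $k$-cell in $C(p,X)$ need not be aligned with the inclusion order, and even an unpointed $k$-od produced from it must be shown to have $p$ in its core), but the proposed remedy --- ``combine a Whitney map with an order-arc analysis around the members of $\mathcal{K}$ to bound the local dimension of $C(p,X)$ at small continua by $k-1$'' --- is not an argument. It is not even clear that the statement you would need is about \emph{local} dimension at \emph{small} continua: a $k$-cell could sit anywhere in $C(p,X)$, around large members, so any contrapositive dimension bound must be global over $C(p,X)$, and deriving such a bound from ``no subcontinuum $M\ni p$ admits $k$ independent enlargements'' is precisely the content of Pellicer-Covarrubias's Corollary 3.12 (and, in the unpointed case, of the classical theorem that $C(X)$ contains a $k$-cell iff $X$ contains a $k$-od, whose necessity half is already nontrivial). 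As written, half of the equivalence is missing; you should either supply that argument in full or do what the paper does and cite \cite{Pellicer(2007)} for it.
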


For the class of finite graphs, the following result is similar to the last one and can be considered a generalization in some particular cases. 

\begin{theorem}\label{maintheorem}
Let $X$ be a finite graph and $p\in X$. Then for each $A\in C(p,X)$ and $\varepsilon >0$, there exists a subset $\mathcal A\subset C(p,X)$ such that $\mathcal A$ contains  an $n$-cell, where $n=\textrm{ord}(p,X)$ and $H_2(\mathcal A,\{A\})<\varepsilon$.
\end{theorem}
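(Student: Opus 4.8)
The plan is to construct an explicit embedding of an $n$-cube into $C(p,X)$ whose image $\mathcal A$ lies within Hausdorff distance $\varepsilon$ of $A$ and contains $A$ itself; once $A\in\mathcal A$, the estimate $H_2(\mathcal A,\{A\})<\varepsilon$ is automatic. First I would fix $\delta<\varepsilon/2$ so small that $p$ has a neighborhood homeomorphic to a simple $n$-od with arms $e_1,\dots,e_n$ of length at least $\delta$ (writing $a_i(t)$ for the point of $e_i$ at arc-distance $t$ from $p$), and so that along each arm $A$ either meets $e_i$ only at $p$ or contains the whole initial segment $[p,a_i(\delta)]$; this is possible because $A$, being a subcontinuum of a finite graph, is itself a finite graph. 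Call the arms of the first kind \emph{free} and those of the second kind \emph{occupied}; there are $n-k$ free arms and $k=\textrm{ord}(p,A)$ occupied ones.

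For each free arm I would grow $A$ outward: $t_i\mapsto A\cup[p,a_i(t_i)]$ is an order arc in $C(p,X)$ starting at $A$, and since $[p,a_i(t_i)]$ is attached to $A$ at $p$, every such set is a subcontinuum containing $p$. This yields $n-k$ coordinate directions; they are independent and stay within $\delta$ of $A$ since the modified regions lie outside $A$ and have length at most $\delta$. The remaining $k$ directions must come from shrinking $A$ itself, and here lies the main obstacle: retracting $A$ simultaneously along all $k$ occupied arms at $p$ destroys connectivity as soon as several of them belong to a common cycle of $A$ (for instance, the two arms of a loop through $p$). I would therefore organize the retraction through a spanning tree $T$ of $A$ rather than arm-by-arm at $p$.

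From each of the $\beta=\beta_1(A)$ edges of $A$ outside $T$ (the cycle rank counts exactly the non-tree edges) I remove a small open subarc about an interior point and regrow it independently from its two sides, giving two genuine coordinates per non-tree edge; since $T$ stays intact each resulting set is connected and contains $p$, and distinct parameters produce distinct gaps. From each of the $E_0$ endpoints of $A$ I shrink the pendant edge inward from its free tip, one coordinate apiece (for $n\ge2$, $p$ is not an endpoint, so $p$ is never deleted). All these modifications take place in pairwise disjoint edge interiors, and disjointly from the free-arm growths, so the whole assignment is an embedding of a cube of dimension $(n-k)+2\beta+E_0$, with the all-zero corner returning exactly $A$.

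Finally I would verify the count. A handshake computation gives $\sum_{v}(\deg v-2)=2E-2V=2\beta-2$, hence $2\beta+E_0=2+\sum_{\deg v\ge3}(\deg v-2)\ge 2+(\textrm{ord}(p,A)-2)=k$ when $\textrm{ord}(p,A)\ge3$, and trivially $2\beta+E_0\ge2\ge\textrm{ord}(p,A)$ otherwise. Thus the constructed cube has dimension at least $(n-k)+k=n$, and restricting to an $n$-dimensional subface produces the desired $n$-cell $\mathcal A\subset C(p,X)$. The closeness $H_2(\mathcal A,\{A\})\le\delta<\varepsilon$ holds because every member of $\mathcal A$ differs from $A$ only by arcs added or deleted of length at most $2\delta$. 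The delicate point throughout is the \emph{simultaneous} control of connectivity and injectivity; the spanning-tree bookkeeping, together with the inequality $2\beta_1(A)+E_0\ge\textrm{ord}(p,A)$, is precisely what makes both hold at once.
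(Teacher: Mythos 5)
Your strategy---grow $A$ into the free arms at $p$ and manufacture the remaining coordinates by puncturing and regrowing small subarcs of $A$, closing the dimension count with an Euler-characteristic identity---is viable and differs from the paper's bookkeeping in an interesting way: the paper works component-by-component on $A-\{p\}$, puncturing $m(C)-1$ of the edges of $X$ at $p$ inside each component $C$ with $\textrm{ord}(p,C\cup\{p\})=m(C)\geq 2$ (which gives $2(m(C)-1)\geq m(C)$ coordinates) and using a short order arc below $C\cup\{p\}$ for each component in which $p$ is an end point, whereas your punctures live on the non-tree edges and at the leaves of $A$ globally, with the identity $2\beta_1(A)+E_0=2+\sum_{\textrm{ord}(v,A)\geq 3}(\textrm{ord}(v,A)-2)\geq \textrm{ord}(p,A)$ doing the counting. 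That identity is correct and the spanning-tree argument does guarantee connectedness of every set you form.

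There is, however, one genuine error: the parenthetical claim that for $n\geq 2$ the point $p$ is not an end point of $A$ is false, since $n=\textrm{ord}(p,X)$ controls nothing about $\textrm{ord}(p,A)$. Take $X=[0,1]$, $p=1/2$, $A=[1/2,1]$: then $n=2$ but $p\in E(A)$, and your instruction to shrink every pendant edge of $A$ inward from its free tip produces the sets $[1/2+u,\,1]$, which do not contain $p$ and hence leave $C(p,X)$. The analogous care is needed when $p$ is an ordinary point lying in the interior of a non-tree edge of $A$: the deleted subarc must be chosen away from $p$. Both defects are repairable: choose all puncture points to avoid $p$, and when $p\in E(A)$ simply discard the coordinate coming from $p$'s own pendant edge; in that case $k=\textrm{ord}(p,A)=1$ and $2\beta_1(A)+(E_0-1)\geq 1$ still holds (because $2\beta_1(A)+E_0\geq 2$ whenever $A\neq\{p\}$), so the total dimension remains at least $(n-k)+k=n$. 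You should also note that the degenerate case $A=\{p\}$, where $2\beta_1(A)+E_0=0$ rather than $\geq 2$, is harmless since then $k=0$ and the $n$ free-arm growths already give an $n$-cell. With these repairs your argument is complete.
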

\begin{proof}
By Lemma \ref{finalesordinarios} it is only necessary to consider the case when $p\in R(X)$.

Suppose $0<\varepsilon <\min\{d(x,y):x,y\in V(X),x\neq y\}/4$. Let $L_1, L_2, \cdots , L_n$ be the edges of $X$ emanating from $p$. For each component $C$ of $A- \{p\}$, note that $C\cup \{p\}$ is a subgraph of $X$ such that either 
\begin{enumerate}[i)]
%\item $p\in O(C\cup \{p\})$, or
\item $p\in R(C\cup \{p\})\cup O(C\cup \{p\})$ or
\item $p\in E(C\cup \{p\})$.
\end{enumerate}
If $p\in O(C\cup\{p\})\cup R(C\cup \{p\})$, then $m(C):=ord(p,C\cup \{p\})\geq 2$. Without loss of generality, suppose that $L_1, L_2, \cdots , L_{m(C)}$ are edges of $X$ contained in $C\cup \{p\}$. Let $m_i$ be the midpoint of $L_i$. For  $i\in \{ 1,2,\cdots , m(C)-1\}$, let $a_i,b_i\in L_i$ such that $m_i\in a_ib_i$ and $d(a_i,m_i)=d(m_i,b_i)=\varepsilon /2$. Let $G_C=(C\cup\{p\})\backslash \bigcup \limits _{i=1}^{m(C)-1}(a_ib_i)$, this is clearly a continuum. Moreover, for each $D\in C(G_C,C\cup\{p\})$, $H(D,C\cup \{p\})<\varepsilon$.

For each $i\in \{1,2,\cdots , m(C)-1\}$, let $a'_i,b'_i$ be the midpoints of $a_im_i$ and $m_ib_i$, respectively. Let $J_i^a(t)$ be the arc in $a_ia'_i$ containing $a_i$, such that the length of $J_i^a(t)$ is equal $t$, for $t\in [0,\varepsilon /4]$. We define $J_i^b(t)$ in a similar way, for $b_ib'_i$. 
Let $\gamma _i: [0,\varepsilon /4]^2\longrightarrow C(G_C,C\cup\{p\})$ be defined as $\gamma _i(s,t)=G_C\cup J_i^a(s)\cup J_i^b(t)$.
 Define $\gamma_C :[0,\varepsilon /4]^{(m(C)-1)}\times [0,\varepsilon /4]^{(m(C)-1)}\longrightarrow C(G_C, C\cup\{p\})$ as
\[
\gamma _C (\overrightarrow{s},\overrightarrow{t}) =\bigcup \limits _{i=1}^{m(C)-1} \gamma _i(s_i, t_i),
\]
where $\overrightarrow{s}=(s_1,s_1,\cdots , s_{m(C)-1})$, $\overrightarrow{t}=(t_1,t_1,\cdots , t_{m(C)-1})$. 
It is easy to see that $\gamma _C$ is an embedding.
 Let $\mathcal{C}=\{C : \text{ is component of } A-\{p\} \text{ satisfying i) }  \}$, $l=|\mathcal C|$ and  $m=\sum \limits _{C\in \mathcal C} m(C)$; set $\mathcal C=\{C_1,\cdots , C_l\}$.
 
Let $\gamma: [0,\varepsilon /4]^{2m-l}\longrightarrow \bigcup \limits _{i=1}^l C(G_{C_i}, C_i\cup \{p\})$ given by
\[
\gamma (\overrightarrow{s_1}, \cdots ,\overrightarrow{s_l}, \overrightarrow{t_1}, \cdots ,\overrightarrow{t_{l}}) 
= \bigcup \limits _{i=1}^l \gamma _{C_i} (\overrightarrow{s_i}, \overrightarrow{t_i}) .
\] 
It is not difficult to prove that $\gamma$ is an embedding.
 
If $p\in E(C\cup\{p\})$, take an order arc $\alpha _C$ in $C(C\cup\{p\})$ from $\{p\}$ to $C\cup\{p\}$. By the continuity of $\alpha _C$ in $1$ there exists $\delta _C >0$ such that if $1-\delta _C \leq t\leq 1$, then $H(\alpha _C (t), C\cup\{p\})<\varepsilon$. \\
Let $\mathcal D =\{C: C \text{ is component of }A-\{p\} \text{ satisfaying ii) } \}$ and $k=|\mathcal D|$. \\
Let $\alpha:\displaystyle\prod_{C\in \mathcal D}[1-\delta_C,1]\to \displaystyle\bigcup_{C\in\mathcal D}C(C\cup \{p\})$ defined as $\alpha(\overrightarrow{x})=\displaystyle\bigcup_{C\in\mathcal D}\alpha(x_C)$ for $\overrightarrow{x}=(x_C)_{C\in \mathcal D}\in \displaystyle\prod_{C\in \mathcal D}[1-\delta_{C},1]$. It is not difficult to prove that $\alpha$ is an embedding.

%Note that  $\{p\}\subsetneq A\cap L_i$ implies that $A\cap (L_i-\{p_i\})$ is contained in some component of $A\backslash \{p\}$.

Let $L_{n_1}, L_{n_2}, \cdots , L_{n_r}$ be the edges such that there exist an arc $Z_i\subset L_{n_i}$ with $Z_i\cap A=\{p\}$. Observe that $r=n-(m+k)$.  For each $i\in \{1,2,\cdots , r\}$ let $\beta_i$ be an order arc in $C(Z_i)$ from $\{p\}$ to $Z_i$.

%Let .
 %$L_{n_i}\cap A=\{p\}$, for $i\in \{1, 2, \cdots , r\}$.
 
 By continuity of $\beta _i$ in $0$, there exists $\delta _i>0$ such that if $0\leq t\leq \delta _i$, then $H(\{p\}, \beta _i(t))<\varepsilon$ and $\beta _i(t)\cap A=\{p\}$. Let $\beta:\displaystyle\prod^{r}_{i=1}[0,\delta_{i}]\to \displaystyle\bigcup^{r}_{i=1}L_{n_{i}}$ de defined as $\beta(\overrightarrow{x})=\displaystyle\bigcup^r_{i=1}\beta_i (x_i)$ for $\overrightarrow{x}=(x_1, \cdots,x_r)\in \displaystyle\prod_{i=1}^r[0,\delta_i]$. It is easy to prove that $\beta$ is an embedding.

Let  $h_{2m-l}:[0,1]^{2m-l}\longrightarrow [0,\varepsilon/4]^{2m-l}$, $h_k:[0,1]^k\longrightarrow \prod \limits _{C\in \mathcal D}[1-\delta _C , 1]$ and $h_k:[0,1]^r\longrightarrow \prod \limits _{i=1}^r[0,\delta _i]$ be homeomorphisms. 

We define $h:[0,1]^{2m-l}\times [0,1]^k\times [0,1]^r\longrightarrow C(p,X)$ as 
\[
 h(\overrightarrow{x}, \overrightarrow{y}, \overrightarrow{z})= \gamma \circ h_{2m-l}(\overrightarrow{x}) \cup \alpha \circ h_k(\overrightarrow{y}) \cup \beta \circ h_r(\overrightarrow{z})
\]
where $\overrightarrow{x}\in [0,1]^{2m-l}$, $\overrightarrow{y}\in [0,1]^{k}$ and $\overrightarrow{z}\in [0,1]^{r}$. Note that $h$ is an embedding, thus, $\mathcal A: =Im (h)$, the image of $h$, is a $(2m-l+k+r)$-cell contained in $C(p,X)$. Moreover, by the metric in $X$, $H_2(\mathcal A, \{A\})<\varepsilon$. Since $m_i(C) \geq 2$, and $m=\sum \limits _{i=1}^l m_i(C) \geq 2l$,
\begin{align*}
2m-l+k+r & = 2m -l +k + (n-k-m) \\
 & = m+n-l \geq n .
\end{align*}
Then $\mathcal A$ contains an $n$-cell.
\end{proof}

\begin{corollary}
Let $X$ be a finite graph and $p,q\in X$ such that $C(p,X)\approx C(q,X)$. Then $\textrm{ord}(p,X)=\textrm{ord}(q,X)$.
\end{corollary}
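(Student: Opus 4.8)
The plan is to attach to the homeomorphism type of $C(p,X)$ a topological invariant that recovers $\textrm{ord}(p,X)$. For a compact metric space $Y$ and $y\in Y$, write $\dim_y Y$ for the local (covering) dimension of $Y$ at $y$, and set $\mu(Y)=\min\{\dim_y Y:y\in Y\}$; since local dimension is preserved pointwise by homeomorphisms, $\mu$ is a topological invariant. Thus it suffices to prove that $\mu(C(p,X))=\textrm{ord}(p,X)$ for every finite graph $X$ and every $p\in X$, for then $C(p,X)\approx C(q,X)$ forces $\textrm{ord}(p,X)=\mu(C(p,X))=\mu(C(q,X))=\textrm{ord}(q,X)$.

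Write $n=\textrm{ord}(p,X)$. First I would prove the lower bound $\dim_A C(p,X)\ge n$ for every $A\in C(p,X)$, and this is exactly where Theorem \ref{maintheorem} enters. Given $A$ and $\varepsilon>0$, the theorem yields a set $\mathcal A\subset C(p,X)$ containing an $n$-cell with $H_2(\mathcal A,\{A\})<\varepsilon$; the latter inequality places $\mathcal A$, and in particular its $n$-cell, inside the open $\varepsilon$-ball about $A$ in $C(p,X)$. Hence every neighborhood of $A$ contains an $n$-cell, and since such a cell is a compact, thus closed, $n$-dimensional subspace, monotonicity of covering dimension gives $\dim_A C(p,X)\ge n$. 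As $A$ was arbitrary, $\mu(C(p,X))\ge n$.

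Next I would show that the value $n$ is actually attained, at the point $\{p\}$. For $\delta>0$ smaller than the distance from $p$ to the remaining vertices of $X$, the open ball $\{B\in C(p,X):H(B,\{p\})<\delta\}$ coincides with $\{B\in C(p,X):B\subset B_\delta(p)\}$, because $p\in B$ makes the inclusion $\{p\}\subset N(\delta,B)$ automatic. For such $\delta$ the set $\overline{B_\delta(p)}$ is a simple $n$-od with vertex $p$, so this ball embeds into $C(p,T_n)$, which is homeomorphic to the $n$-cell $[0,1]^n$ (extend a subcontinuum a variable length along each of the $n$ arms). By monotonicity again, $\dim_{\{p\}}C(p,X)\le n$, and together with the lower bound this yields $\dim_{\{p\}}C(p,X)=n$. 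Therefore $\mu(C(p,X))=n=\textrm{ord}(p,X)$, and the same computation applied to $q$ completes the argument.

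I expect the delicate point to be the choice of invariant rather than any single estimate: the naive attempt to compare $\dim_{\{p\}}C(p,X)$ with $\dim_{\{q\}}C(q,X)$ fails because a homeomorphism $C(p,X)\to C(q,X)$ need not send $\{p\}$ to $\{q\}$, so one must pass to a quantity—here the minimum of the local dimension over all points—that is insensitive to where the distinguished element goes. The remaining care is routine: verifying the embedding modeled on $C(p,T_n)\approx[0,1]^n$ and invoking monotonicity of covering dimension together with finite-dimensionality, both standard for compact metric spaces.
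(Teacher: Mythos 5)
Your proof is correct and follows essentially the same route as the paper: both arguments rest on Theorem \ref{maintheorem} to show that every neighborhood of every point of $C(p,X)$ contains an $\mathrm{ord}(p,X)$-cell, and on the fact that $\{p\}$ has arbitrarily small neighborhoods lying in an $n$-cell (the paper phrases this as $\{p\}$ having a neighborhood base of $n$-cells and derives a contradiction, while you package the same two estimates into the invariant $\mu(Y)=\min_y \dim_y Y$). Your version is slightly more explicit about why the minimum is attained at $\{p\}$ and why the invariant is insensitive to where a homeomorphism sends $\{p\}$, a point the paper leaves implicit.
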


\begin{proof}
Suppose that $n={ord}(p,X)< ord (q,X)=m$ and let $h:C(p,X)\rightarrow C(q,X)$ be a homeomorphism. Since $\{p\}$ has a neighborhood base in $C(p,X)$ consisting of $n-\textrm{cells}$, $h(\{p\})$ has a neighborhood base in $C(q,X)$ consisting of $n-\textrm{cells}$ which is impossible by Theorem \ref{maintheorem}, thus $m\leq n$. A similar argument implies that $n\leq m$. Thus, $n=m$.
\end{proof}

The following is an  easy consequence of the previous corollary, although it is trivial, illustrates necessary conditions for $C(p,X)$ and $C(q,X)$ to be homeomorphic.
 
\begin{remark}\label{orders}
Let $X$ be a finite graph and $p,q\in X$ such that $C(p,X)\approx C(q,X)$.  Let $\mathcal H (X)$ be one of $E(X), O(X)$ or $R(X)$. It holds that if $p\in \mathcal H(X)$ then also $q\in \mathcal H(X)$.
%\begin{enumerate}
%\item Let $e\in E(X)$ and $p\in X$ be such that $C(e,X)\approx C(p,X)$ then $p\in E(X)$.
%\item Let $p\in O(X)$ and $q\in X$ such that $C(q,X)\approx C(p,X)$. Then $q\in O(X)$.
%\item Let $p\in R(X)$ and $q\in X$ be such that  $C(p,X)\approx C(q,X)$, then $q\in R(X)$.
%\end{enumerate} 
\end{remark}

Concerning previous remark, observe that if $X$ is a finite graph such that $K(X)$ has size $1$, then each point in $X$ is an ordinary point, thus, by \citep[Proposition 9.5]{Nadler(1992)} we conclude that $X$ is a simple closed curve. We rewrite this in the next result, which gives a characterization of simple closed curves.

\begin{corollary}\label{simpleclosedcurve}
For a finite graph $X$ the following conditions are equivalent:
\begin{enumerate}
\item $X$ is a simple closed curve,
\item $K(X)$ has size $1$,
\item $X$ is homogeneous.
\end{enumerate}
\end{corollary}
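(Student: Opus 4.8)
The plan is to establish the cycle of implications $(1)\Rightarrow(3)\Rightarrow(2)\Rightarrow(1)$, so that the three conditions become equivalent. Two of these implications are immediate consequences of results already available, while the third is essentially the observation recorded in the paragraph preceding the statement.

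First I would prove $(1)\Rightarrow(3)$. A simple closed curve is homeomorphic to the unit circle, which is homogeneous (for instance, the rotations already act transitively on it); since homogeneity is a topological invariant, any simple closed curve is homogeneous. Next, for $(3)\Rightarrow(2)$, note that a homogeneous continuum is precisely a $\frac{1}{1}$-homogeneous continuum, i.e. one whose group of self-homeomorphisms has a single orbit. Applying Proposition \ref{equality} with $n=1$ gives that $K(X)$ has size $m\leq 1$; since the size is a positive integer by definition, $m=1$.

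The remaining implication $(2)\Rightarrow(1)$ is the heart of the argument. Suppose $K(X)$ has size $1$, so that $C(p,X)\approx C(q,X)$ for every pair $p,q\in X$. Since $X$ is a nondegenerate finite graph it contains a nondegenerate arc, and hence $O(X)\neq\emptyset$; fix $p_0\in O(X)$. For an arbitrary $q\in X$ we have $C(q,X)\approx C(p_0,X)$, so Remark \ref{orders} (applied with $\mathcal H(X)=O(X)$) forces $q\in O(X)$. Thus every point of $X$ is an ordinary point, i.e. $\textrm{ord}(x,X)=2$ for all $x\in X$, and $E(X)=R(X)=\emptyset$. By the classification of continua all of whose points have order $2$ (\cite[Proposition 9.5]{Nadler(1992)}), $X$ is a simple closed curve.

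The only genuinely delicate point is the step showing that size $1$ forces every point to be ordinary: it relies on the nonemptiness of $O(X)$ for every nondegenerate finite graph, together with the order-type invariance furnished by Remark \ref{orders}. Once all points are seen to have order $2$, the conclusion follows at once from the cited structure theorem, so I do not anticipate any further obstacle.
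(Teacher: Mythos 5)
Your proposal is correct and follows essentially the same route as the paper: the paper's justification is precisely the observation that size $1$ together with Remark \ref{orders} forces every point to be ordinary, whence $X$ is a simple closed curve by \cite[Proposition 9.5]{Nadler(1992)}, with the remaining implications being immediate from homogeneity of the circle and Proposition \ref{equality}. Your write-up merely makes explicit the finiteness of $V(X)$ guaranteeing $O(X)\neq\emptyset$, which the paper leaves tacit.
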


Let $X$ be a finite graph. If $q\in E(X)$ and $X$ is not an arc, we denote by $v(q)$  the unique point in $R(X)$ such that the component $C$, of $X-\{v(q)\}$, containing $q$, satisfies that $C\cup\{v(q)\}$ is an arc.
\begin{proposition}\label{neighbor}
Let $X$ be a finite graph which is not an arc. If $e_1,e_2\in E(X)$ and $C(e_1,X)\approx C(e_2,X)$ then $\textrm{ord}(v(e_1),X)=\textrm{ord}(v(e_2),X)$.
\end{proposition}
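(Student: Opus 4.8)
The plan is to read off $\textrm{ord}(v(e_i),X)$ from the \emph{local} topology of $C(e_i,X)$ at a canonically determined point and then transport this information along the homeomorphism. Fix an endpoint $e\in E(X)$, write $v=v(e)\in R(X)$ and $n=\textrm{ord}(v,X)$, and let $\gamma=C\cup\{v\}$ be the free arc joining $e$ to $v$, where $C$ is the component of $X-\{v\}$ containing $e$; thus $\gamma$ is an arc all of whose interior points have order $2$. Let $Y$ be the closure of $X\setminus(\gamma\setminus\{v\})$, a subgraph with $v\in Y$ and $\textrm{ord}(v,Y)=n-1$. First I would establish the decomposition
\[
C(e,X)=I\cup C(\gamma,X),\qquad I\cap C(\gamma,X)=\{\gamma\},
\]
where $I=\{[e,x]:x\in\gamma\}$ is an arc from $\{e\}$ to $\gamma$: if $A\in C(e,X)$ does not contain $v$, connectedness forces $A=[e,x]$ for some $x\in\gamma\setminus\{v\}$, while if $v\in A$ then $A\supseteq\gamma$. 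Moreover $B\mapsto\gamma\cup B$ is a continuous bijection, hence (both spaces being compact metric) a homeomorphism, from $C(v,Y)$ onto $C(\gamma,X)$ carrying $\{v\}$ to $\gamma$.

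Next I would describe the local structure of $C(e,X)$ at $\gamma$. Growing $\gamma$ into the $n-1$ edges of $Y$ at $v$ produces, by Theorem \ref{maintheorem} and the corollary following it applied to $C(v,Y)$, a neighborhood basis of $\gamma$ inside $C(\gamma,X)$ consisting of $(n-1)$-cells with $\gamma$ at a corner. The only other way to move $A=\gamma$ is to retract it along the free arc; but any subcontinuum meeting $Y\setminus\{v\}$ must contain all of $\gamma$, so this retraction is \emph{incompatible} with growing into $Y$. Hence a neighborhood of $\gamma$ is homeomorphic to an $(n-1)$-cell with a single arc (the whisker $I$) attached at its corner, and since $v\in R(X)$ forces $n-1\ge 2$, the largest $k$ for which $\gamma$ has arbitrarily small neighborhoods containing a $k$-cell equals $n-1$. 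This number is a topological invariant of the pointed space $(C(e,X),\gamma)$. In the same way, every interior point $[e,x]$ with $x\in\gamma\setminus\{v\}$, as well as $\{e\}$ itself, has local dimension $1$, so the connected component of $\{e\}$ in the set of points of local dimension $1$ is exactly $I\setminus\{\gamma\}$, whose closure adds precisely $\gamma$.

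Finally I would transport this across a homeomorphism $h\colon C(e_1,X)\to C(e_2,X)$. The crucial point, and the main obstacle, is to pin down $\gamma_i$ intrinsically, which I would do through $\{e_i\}$: because $e_i$ is an endpoint and $X$ is not an arc, $\{e_i\}$ should be the \emph{unique} point of $C(e_i,X)$ possessing a neighborhood homeomorphic to $[0,1)$, since every other element admits either no nontrivial motion or at least two independent one-parameter motions (an element that can both grow and shrink, or shrink at two free ends, is not such a free end). Verifying this uniqueness is where the real work lies and requires a careful case analysis of the admissible grow and shrink motions of a subcontinuum containing $e_i$. Granting it, $h(\{e_1\})=\{e_2\}$, so $h$ maps the local-dimension-$1$ component of $\{e_1\}$ onto that of $\{e_2\}$, that is $h(I_1\setminus\{\gamma_1\})=I_2\setminus\{\gamma_2\}$; taking closures yields $h(\gamma_1)=\gamma_2$. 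Since the invariant "largest $k$ with arbitrarily small neighborhoods containing a $k$-cell" is preserved at $\gamma_i$, we conclude $n_1-1=n_2-1$, i.e. $\textrm{ord}(v(e_1),X)=\textrm{ord}(v(e_2),X)$.
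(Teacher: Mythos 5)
Your proposal is correct and follows essentially the same route as the paper: both arguments use the local cell structure coming from Theorem \ref{maintheorem} to show that a homeomorphism must carry the arc of proper subarcs of the terminal edge $e_1v(e_1)$ into $C(e_2,e_2v(e_2))$, hence (by continuity, i.e.\ taking closures) carry $l_1=e_1v(e_1)$ to $l_2=e_2v(e_2)$, and then read off $\textrm{ord}(v(e_i),X)$ from the dimension of the cells in arbitrarily small neighborhoods of $l_i$. Your preliminary step of pinning down $\{e_i\}$ as the unique endpoint-like element is not actually needed --- the set of locally one-dimensional elements of $C(e_i,X)$ is already intrinsic and connected, which is all the paper uses --- but it causes no harm.
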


\begin{proof}
Let $h:C(e_{1},X)\rightarrow C(e_{2},X)$ be a homeomorphism. Denote by $l_{1}$ and $l_{2}$, the edges of $X$, $e_{1}v(e_{1})$ and $e_{2}v(e_{2})$, respectively. By Theorem \ref{maintheorem} we have that for each $A\in C(e_{1},l_{1})-\{l_{1}\}$, $h(A)\in C(e_{2},l_{2})$. By the continuity of $h$, $h(l_{1})\in C(e_{2},l_{2})$. Again by Theorem \ref{maintheorem}, $h(l_{1})=l_{2}$ and ${ord}(v(e_{1}),X)={ord}(v(e_{2}),X)$.
\end{proof}

\begin{definition}

Let $X$ be an finite graph, $p\in O(X)$ and $v,w\in V(X)$ are the end points of the edge containing $p$. Define
\[
\Sigma (p,X) =\begin{cases}
{ord}(v,X) + {ord}(w,X), & \text{ if   } v\neq w , \\
{ord}(v,X), & \text{ if  } v=w.
\end{cases}
\]
\end{definition}

The following lemma is a particular case of \cite[Corollary 3.5, p. 1006]{Pellicer(2007)}.

\begin{lemma}\label{neigboredges}
Let $X$ be a finite graph. If $A$ is an edge of $X$ and $p\in O(X)\cap A$, then there exist $\mathcal{A}\subset C(p,X)$ such that $\mathcal{A}$ is contained in a $\Sigma(p,X)$-cell.%and $A$ is in the frontier of $\mathcal{A}$ in $C(p,X)$.
\end{lemma}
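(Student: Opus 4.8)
The plan is to realize $\mathcal A$ as a neighbourhood of the edge $A$ inside $C(p,X)$ and to embed it into a cube of dimension $\Sigma(p,X)$. Write $A=vw$ and set $d_v=\mathrm{ord}(v,X)$, $d_w=\mathrm{ord}(w,X)$, so that $\Sigma(p,X)=d_v+d_w$ when $v\neq w$ (the loop case $v=w$ is handled at the end). First I would fix $\delta>0$ smaller than a quarter of the common edge length and small enough that $p$ lies at distance $>2\delta$ from both $v$ and $w$, and that the $\delta$-ball around each of $v,w$ meets only the edges incident to that vertex. Then set
\[
\mathcal A=\{\,B\in C(p,X): H(B,A)\le \delta\,\},
\]
a closed, hence compact, neighbourhood of $A$ in $C(p,X)$. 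The crucial preliminary observation is that, since no edge attaches to the interior of $A$, within distance $\delta$ of an interior point of $A$ the graph $X$ coincides with $A$; hence for $B\in\mathcal A$ the intersection $B\cap A$ is a single sub-arc of $A$ containing $p$, it is $\delta$-dense in $A$ (because $A\subseteq N(\delta,B)$), and $B$ can therefore differ from $A$ only near $v$ and near $w$.

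The heart of the argument is to encode each $B\in\mathcal A$ by $\Sigma(p,X)$ continuous real parameters. Near $v$ there are two mutually exclusive possibilities: either $B$ does not reach $v$, in which case I record the retraction length $\delta_v(B)=d(v,B)\in[0,\delta]$ (and $B$ has a free end in the interior of $A$, so it occupies none of the other edges at $v$); or $B$ reaches $v$, so $\delta_v(B)=0$, and then I record the lengths $\eta^v_1(B),\dots,\eta^v_{d_v-1}(B)\in[0,\delta]$ of the arcs that $B$ contributes to the other $d_v-1$ edges incident to $v$. Doing the same at $w$ and rescaling $[0,\delta]$ to $[0,1]$ gives a map
\[
\Phi:\mathcal A\longrightarrow [0,1]^{d_v}\times[0,1]^{d_w}=[0,1]^{\Sigma(p,X)},\qquad \Phi(B)=\bigl(\delta_v,\eta^v_1,\dots,\eta^v_{d_v-1};\,\delta_w,\eta^w_1,\dots,\eta^w_{d_w-1}\bigr).
\]
Each coordinate is a continuous functional of $B$ in the Hausdorff metric (the distance $d(v,B)$ satisfies $|d(v,B)-d(v,B')|\le H(B,B')$, and the ``length in a fixed sub-edge'' functionals are continuous for finite graphs with the arc-length metric), so $\Phi$ is continuous. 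It is injective because the recorded data reconstruct $B$ completely: $B$ is the union of the sub-arc of $A$ cut out by the two retractions together with the prescribed arcs growing into the branch edges at $v$ and $w$. Since $\mathcal A$ is compact and the cube is Hausdorff, $\Phi$ is an embedding, so $\mathcal A$ is homeomorphic to a subspace of the $\Sigma(p,X)$-cell $[0,1]^{\Sigma(p,X)}$, which is exactly the assertion.

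For the loop case $v=w$, the edge $A$ is a simple closed curve attached to $X$ only at $v$; its two ends both abut $v$ and occupy two of the edge-germs at $v$, leaving $d_v-2$ other incident edges into which $B$ may grow. Recording the two retraction lengths (one per end of the loop) together with these $d_v-2$ growth lengths produces, by the identical reasoning, an embedding of $\mathcal A$ into $[0,1]^{d_v}=[0,1]^{\Sigma(p,X)}$.

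The step I expect to be the main obstacle is verifying that $\Phi$ is continuous and injective across the transition locus at each vertex, where the two regimes meet: when $\delta_v(B)>0$ the branch coordinates $\eta^v_j(B)$ are forced to vanish, and they become free precisely when $\delta_v(B)=0$. The key point to check is that a sequence of retracted subcontinua cannot converge to one that branches — if $B$ occupies an arc of length $\eta>0$ in a side edge at $v$, then any $B'$ not reaching $v$ satisfies $H(B,B')\ge\eta$, so as $B_n\to B$ the approximants must eventually reach $v$ with branch lengths tending to those of $B$, keeping $\Phi$ continuous. One must also confirm that the middle sub-arc of $A$ is always retained (so that $p\in B$ is automatic and the reconstruction of $B$ from its coordinates is genuine) and that the data near $v$ and near $w$ are independent, which follows because a subcontinuum so close to $A$ cannot be altered near $v$ and near $w$ in a coupled way. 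These, together with the continuity of the retraction and growth-length functionals, are the remaining routine checks.
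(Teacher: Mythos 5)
The paper itself gives no proof of this lemma (it is quoted as a particular case of \cite[Corollary 3.5]{Pellicer(2007)}), so your self-contained argument is a genuinely different route. The non-loop case $v\neq w$ is essentially correct: with $\delta$ small, cut-point arguments inside $N(\delta,A)$ show that for $B\in\mathcal A$ the set $B\cap A$ is a subarc $[v+s_v,\,w-s_w]$ and that $B$ meets each remaining edge-germ at $v$ (resp.\ $w$) in an arc emanating from the vertex, which is possible only when $s_v=0$ (resp.\ $s_w=0$); your $d_v+d_w$ coordinates are continuous, reconstruct $B$, and hence embed the compact set $\mathcal A$ into $[0,1]^{\Sigma(p,X)}$, which is exactly the dimension bound that Proposition \ref{neighbors} needs. (One small correction: count edge-\emph{germs} at $v$ rather than edges, since a branch edge incident to $v$ may itself be a loop at $v$ and then carries two independent growth parameters; the total is still $d_v$.)

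The loop case $v=w$, however, has a genuine gap as written. Parametrize the circle $A$ near $v$ by $[-\delta,\delta]$ with $v=0$. Your $\mathcal A$ contains subcontinua $B=\overline{A\setminus(a,b)}$ with $0<a<b<\delta$, i.e.\ the deleted open arc lies on one side of the loop and does \emph{not} contain $v$: since $A$ is a circle, $A\setminus(a,b)$ is still a connected arc containing both $p$ and $v$, and $H(B,A)\le (b-a)/2<\delta$. For such a $B$ both loop-germs at $v$ are fully occupied near $v$, so both of your ``retraction lengths'' equal $0$ and $\Phi(B)=\Phi(A)$ (with the same branch data); injectivity fails. This cannot happen for $v\neq w$ because there the branch germs are dead ends in $N(\delta,A)$ and a gap in the interior of $A$ would disconnect $B$, but the circle provides a second route around the gap. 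The repair is routine and preserves your dimension count: record instead the signed positions $a\le b$ in $[-\delta,\delta]$ of the two endpoints of the arc $B\cap A$ (with $a=b$ when $B\supset A$), plus the $d_v-2$ branch growths, which are forced to vanish exactly when $v\in(a,b)$; these $2+(d_v-2)=\Sigma(p,X)$ continuous coordinates do determine $B$ and yield the desired embedding.
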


\begin{proposition}\label{neighbors}
Let $X$ be a finite graph. If $p,q\in O(X)$ and $C(p,X)\approx C(q,X)$ then $\Sigma (p,X)=\Sigma (q,X)$.
\end{proposition}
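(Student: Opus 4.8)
The plan is to exhibit $\Sigma(p,X)$ as an invariant of the homeomorphism type of the pointed space $(C(p,X),\{p\})$, and then to argue that the homeomorphism $h\colon C(p,X)\to C(q,X)$ necessarily sends the distinguished point $\{p\}$ to $\{q\}$, so that this invariant transfers. First I would pin down the distinguished point. Since $\{p\}$ is the only one-point member of $C(p,X)$, it is minimal for inclusion; more usefully, because $p$ is ordinary, the corollary following Theorem~\ref{maintheorem} shows that $\{p\}$ has a neighbourhood basis of $2$-cells, and among the elements enjoying this property $\{p\}$ is singled out as the unique ``grow-only'' corner (a singleton cannot be shrunk, so neither local coordinate direction can decrease). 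As the same description characterizes $\{q\}$ in $C(q,X)$, and $h$ carries neighbourhood bases of $2$-cells to neighbourhood bases of $2$-cells, I would conclude $h(\{p\})=\{q\}$.

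Next I would produce a canonical $2$-cell. Let $A=vw$ be the edge containing $p$. The subarcs of $A$ that contain $p$ form a $2$-cell $Q_p\subset C(p,X)$, swept out by the two order arcs issuing from $\{p\}$ toward the two ends of $A$; its four corners are $\{p\}$, $[v,p]$, $[p,w]$ and $A$, with $\{p\}$ the grow-only corner and $A$ the opposite one. I would argue that $Q_p$ is recovered purely topologically as the closure of the component of the $2$-manifold locus of $C(p,X)$ surrounding $\{p\}$ (near $\{p\}$ the space \emph{is} locally this quarter-plane, and the branching strata at $A$ are singular, so the manifold locus stops there). Consequently $h(Q_p)$ is the analogous cell $Q_q$, and since $h$ fixes the grow-only corner it carries $A$ to the opposite corner $A_q$, the full edge containing $q$.

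The invariant then lives at the corner $A$. Along each of the two edges of $Q_p$ issuing from $A$ — the families $\{[v,b]:b\in[p,w]\}$ and $\{[a,w]:a\in[v,p]\}$ — the growth construction in the proof of Theorem~\ref{maintheorem} attaches at the ramification end a growth orthant of dimension $\mathrm{ord}(v)-1$, respectively $\mathrm{ord}(w)-1$; hence the local cell dimension of $C(p,X)$ along these two strata equals $\mathrm{ord}(v)$ and $\mathrm{ord}(w)$. Lemma~\ref{neigboredges} furnishes the matching upper bound, namely that a neighbourhood of $A$ sits inside a $\Sigma(p,X)$-cell, which certifies that the sum of these two local cell dimensions is exactly $\Sigma(p,X)=\mathrm{ord}(v)+\mathrm{ord}(w)$. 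Because $h|_{Q_p}\colon Q_p\to Q_q$ is a homeomorphism of $2$-cells taking the corner $A$ to $A_q$, it maps the two edges incident to $A$ onto the two edges incident to $A_q$; and since $h$ preserves local cell dimension throughout the ambient space, the unordered pair $\{\mathrm{ord}(v),\mathrm{ord}(w)\}$ is carried onto the corresponding pair for $q$. Taking sums yields $\Sigma(p,X)=\Sigma(q,X)$.

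I expect the main obstacle to be the honest, purely topological recovery of the square $Q_p$ together with its two ``wings'': proving that $Q_p$ is genuinely forced rather than being one of many $2$-cells through $\{p\}$, and that the two boundary strata at $A$ carry local cell dimensions $\mathrm{ord}(v)$ and $\mathrm{ord}(w)$ that are detected \emph{separately}, so that it is their sum — not merely their maximum — that is invariant. This last point is essential, since the maximal dimension of a single cell near $A$ is only $\Sigma(p,X)$ decreased by the number of ramification ends of $A$, so $\Sigma(p,X)$ cannot be read off from one cell alone; this is exactly why the local-model statement of Lemma~\ref{neigboredges} is phrased as containment in a $\Sigma(p,X)$-cell. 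Finally I would treat the degenerate configurations separately: the case where $v$ or $w$ is an end point of $X$ (one wing degenerates to dimension $1$, in agreement with the summand $\mathrm{ord}=1$), the loop case $v=w$ in which both strata return to a single vertex and the defining formula for $\Sigma(p,X)$ is adjusted accordingly, and the trivial case in which $X$ is an arc.
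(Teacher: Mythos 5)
Your second half is close in spirit to the paper's own proof---both arguments reduce the proposition to showing that the homeomorphism $h$ carries the edge containing $p$, viewed as a point of $C(p,X)$, onto the edge containing $q$, and then invoke Lemma~\ref{neigboredges}---but the step you use to locate that edge inside $C(q,X)$ has a genuine gap. You claim $h(\{p\})=\{q\}$ because $\{p\}$ is ``the unique grow-only corner.'' Growing and shrinking are defined by the inclusion order on the hyperspace, and a homeomorphism of hyperspaces has no reason to respect inclusion; locally, a quarter-plane corner is a quarter-plane corner no matter which way the two coordinates point. Indeed, if $w\in E(X)$ then $[p,w]$ also has a quarter-plane neighbourhood in $C(p,X)$ (one coordinate grows past $p$, the other shrinks from $w$), and if $X$ is an arc all four corners of the square $C(p,X)$ are topologically indistinguishable. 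So nothing in your argument forces $h(\{p\})=\{q\}$, and with it goes the identification of $h(A)$ as ``the corner opposite the grow-only corner.'' Whenever both squares $Q_p$ and $Q_q$ have two corners of local cell dimension $2$ (which happens exactly when one vertex of each of the two edges is an end point of $X$), local data at the corners cannot distinguish the diagonal $\{\{p\},A\}$ from the diagonal $\{[v,p],[p,w]\}$, so your final computation may be carried out at the wrong corner of $Q_q$. The paper sidesteps all of this: it never locates $\{p\}$, but shows directly, using Theorem~\ref{maintheorem}, that $h$ sends the subarcs of the open edge into $C(q,l_2)$, obtains $h(l_1)\subset l_2$ by continuity, and upgrades this to $h(l_1)=l_2$ by Theorem~\ref{maintheorem} again, before applying Lemma~\ref{neigboredges}.

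A second, smaller error: your assertion that the local cell dimension of $C(p,X)$ along the stratum $\{[v,b]:b\in(p,w)\}$ equals $\mathrm{ord}(v,X)$ fails when $v\in E(X)$. There one can still shrink away from $v$ and move $b$ in both directions, so the local cell dimension along that stratum is $2$, not $1$, and the sum of your two ``wing'' dimensions becomes $\mathrm{ord}(w,X)+2\neq\Sigma(p,X)=\mathrm{ord}(w,X)+1$. You do flag the end-point case as needing separate treatment, but the bookkeeping you propose for it is incorrect as stated, and this is precisely the configuration in which the corner-identification problem above also arises.
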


\begin{proof}
Let $h:C(p,X)\rightarrow C(q,X)$ be a homeomorphism and let $l_{1}$ and $l_{2}$ be the edges containing $p$ and $q$, respectively. Suppose that $a,b\in l_{1}$ and $c,d\in l_{2}$ are vertices in $X$. Using Theorem \ref{maintheorem} we can see that for each $A\in C(p,l_{1})$ such that $A\subset l_{1}-\{a,b\}$, $h(A)\in C(q,l_{2})$. Then, by the continuity of $h$, $h(l_{1})\subset l_{2}$. By Theorem \ref{maintheorem}, we have that $h(l_{1})=l_{2}$ and by Lemma \ref{neigboredges} this implies that $\Sigma(p,X)=\Sigma(q,X)$.
\end{proof}

%%%%%%%%%%%%%%%%%

\section{Finite graphs with $K(X)$ having size $n$} \label{graficas}

The purpose of this section is to use the previous results to construct for each positive integer $n$, a finite graph $X_{n}$ such that $K(X_{n})$ has size $n$. For this, we will start introducing notation for some sets in the Euclidean Plane, $\mathbb{R}^2$. For each $n\in\mathbb{N}$:
\begin{itemize}
\item $I_{n}=\left[n-1,n\right]\times \{0\}$;
\item $C_{n}$ will denote the circle with radius $\frac{1}{4}$ and center $(n,\frac{1}{4})$;
\item $C$ will denote the circle with radius $\frac{1}{8}$ and center $(2,\frac{1}{8})$;
\item for each $i\in\mathbb{N}$, we consider $l^{(n)}_{i}$ as the linear segment joining $(n,0)$ and $\left(n-\frac{1}{i},-1\right)$;
\end{itemize}

\begin{minipage}[l]{0.5\textwidth}
 \begin{itemize}
\item if $n\geq 3$, $S_{n}=I_{n}\cup C_{n}\cup \displaystyle\bigcup^{2n}_{i=1}l^{(n)}_{i}$;
\item $P_{1}=I_{2}\cup C_{2}\cup C$ ;
\item $P_{2}=I_{2} \cup C_{1}\cup C_{2}\cup C$;
 \end{itemize}
\end{minipage}
\begin{minipage}[r]{0.5\textwidth}
\begin{itemize}
\item $P_{3}=I_{2}\cup C_{2}\cup C \cup \displaystyle\bigcup^{3}_{i=1}l^{(1)}_{i}$;
\item $P_{4}=I_{2}\cup C_{1}\cup C_{2}\cup C\cup l^{(1)}_{1}$;
\item $P_{5}=I_{2}\cup C_{2}\cup C\cup l^{(1)}_{1}\cup l^{(1)}_{2}\cup l^{(2)}_{1}\cup l^{(2)}_{2}$.
\end{itemize}
\end{minipage}

Next we give a picture of these sets.
\[
\begin{tikzpicture}
\draw[->,dashed, gray!80] (-0.3,0) -- (2.5,0);
\draw[->,dashed, gray!80] (0,-0.5) -- (0,0.5);
\draw (0,0) node[above left]{$P_1$};
%%%%
\begin{scope}[thick]
\draw (1,0) -- (2,0);
\draw (2,0.25) circle (0.25cm);
\draw (2,1/8) circle (0.125cm);
\end{scope}
\end{tikzpicture}
%%%%
\hspace{1.5cm}
\begin{tikzpicture}
\draw[->,dashed, gray!80] (-0.3,0) -- (2.5,0);
\draw[->,dashed, gray!80] (0,-0.5) -- (0,0.5);
\draw (0,0) node[above left]{$P_2$};
\begin{scope}[thick]
\draw (1,0) -- (2,0);
\draw (1,1/4) circle (0.25cm) (2,1/4) circle (0.25cm);
\draw (2,1/8) circle (0.125cm);
\end{scope}
\end{tikzpicture}
\hspace{1.5cm}
\begin{tikzpicture}
\draw[->,dashed, gray!80] (-0.3,0) -- (2.5,0);
\draw[->,dashed, gray!80] (0,-0.5) -- (0,0.5);
\draw (0,0) node[above left]{$P_3$};
\begin{scope}[thick]
\draw (1,0) -- (2,0);
\draw (2,1/4) circle (0.25cm);
\draw (2,1/8) circle (0.125cm);
\foreach \x in {0,1-1/2,1-1/3} {
\draw (1,0)-- (\x,-1);
}
\end{scope}
\end{tikzpicture}
\]

\[
\begin{tikzpicture}
\draw[->,dashed, gray!80] (-0.3,0) -- (2.5,0);
\draw[->,dashed, gray!80] (0,-0.5) -- (0,0.5);
\draw (0,0) node[above left]{$P_4$};
\begin{scope}[thick]
\draw (1,0) -- (2,0);
\draw (1,1/4) circle (0.25cm);
\draw (2,1/4) circle (0.25cm);
\draw (1,0)-- (0,-1);
\draw (2,1/8) circle (0.125cm);
\end{scope}
\end{tikzpicture}
\hspace{2.5cm}
\begin{tikzpicture}
\draw[->,dashed, gray!80] (-0.3,0) -- (2.5,0);
\draw[->,dashed, gray!80] (0,-0.5) -- (0,0.5);
\draw (0,0) node[above left]{$P_5$};
\begin{scope}[thick]
\draw (1,0) -- (2,0);
%\draw (1,1/4) circle (0.25cm);
\draw (2,1/4) circle (0.25cm);
\draw (1,0)-- (0,-1) (1,0) -- (1/2,-1);
\draw (2,0)-- (1,-1) (2,0) -- (2-1/2,-1);
\draw (2,1/8) circle (0.125cm);
\end{scope}
\end{tikzpicture}
\]
%Given $i\in\{1,\dots,5\}$ we denote by $\left[i\right]$ the set of all positive integers which are congruent with $i$ module $5$, $\left[i\right]=\left\{k:\frac{k-i}{5}\textrm{ is an integer number}\right\}$. 
%%%%%%%%%%%%%%%%%%%%%%%%%%Javier Aaron %%%%%%%%%%%%%%
We define $X_{n}$ as follows:
\begin{enumerate}
\item Let $X_{1}$ be a simple closed curve, $X_{2}$ an arc and $X_{3}$ a simple triod. %$=\displaystyle\bigcup^{3}_{i=1}l^{(1)}_{i}$ (a simple triod),
% \bigskip
% %%
% \[
% \begin{tikzpicture}[scale=0.8]
% \draw[->,dashed, gray!80] (-0.5,0) -- (3,0);
% \draw[->,dashed, gray!80] (0,-1) -- (0,1);
% \begin{scope}[very thick]
% %\draw (1,0) -- (2,0);
% \draw node at (-0.5,0.5){$X_1$};
% %
% \draw (2,1/4) circle (0.25cm);
% %\draw (2,1/8) circle (0.125cm);
% %\foreach \x in {0,1-1/2,1-1/3} {
% %\draw (1,0)-- (\x,-1);
% %}
% \end{scope}
% \end{tikzpicture}
% \qquad
% \begin{tikzpicture}[scale=0.8]
% \draw[->,dashed, gray!80] (-0.5,0) -- (3,0);
% \draw[->,dashed, gray!80] (0,-1) -- (0,1);
% \draw node at (-0.5,0.5){$X_2$};
% \begin{scope}[very thick]
% \draw (1,0) -- (2,0);
% %
% %\draw (2,1/4) circle (0.25cm);
% %\draw (2,1/8) circle (0.125cm);
% %\foreach \x in {0,1-1/2,1-1/3} {
% %\draw (1,0)-- (\x,-1);
% %}
% \end{scope}
% \end{tikzpicture}
% \qquad
% \begin{tikzpicture}[scale=0.8]
% \draw[->,dashed, gray!80] (-0.5,0) -- (3,0);
% \draw[->,dashed, gray!80] (0,-1) -- (0,1);
% \draw node at (-0.5,0.5){$X_3$};
% \begin{scope}[very thick]
% %\draw (1,0) -- (2,0);
% %
% %\draw (2,1/4) circle (0.25cm);
% %\draw (2,1/8) circle (0.125cm);
% \foreach \x in {0,1-1/2,1-1/3} {
% \draw (1,0)-- (\x,-1);
% }
% \end{scope}
% \end{tikzpicture}
% \]
\item $X_{i+3}=P_i$ for $i\in \{1,2,3,4,5\}$, 
\item for each $n\geq 9$ we define $X_n$ recursively as follows, write $n$ as $n=5(k+1)+r$, where $r\in \{-1,0,1,2,3\}$, then 
$$X_{n}=X_{5k+r}\cup S_{k+2}.$$
\end{enumerate}

To clarify the last formula in 3 we draw some pictures.% below for low values of $n$.
%%%%%%%%%%
% \[
% \begin{tikzpicture}[scale=0.8]
% \draw[->,dashed, gray!80] (-0.5,0) -- (3,0);
% \draw[->,dashed, gray!80] (0,-1) -- (0,1);
% \begin{scope}[very thick]
% %\draw (1,0) -- (2,0);
% \draw node at (-0.5,0.5){$X_1$};
% %
% \draw (2,1/4) circle (0.25cm);
% %\draw (2,1/8) circle (0.125cm);
% %\foreach \x in {0,1-1/2,1-1/3} {
% %\draw (1,0)-- (\x,-1);
% %}
% \end{scope}
% \end{tikzpicture}
% \qquad
% \begin{tikzpicture}[scale=0.8]
% \draw[->,dashed, gray!80] (-0.5,0) -- (3,0);
% \draw[->,dashed, gray!80] (0,-1) -- (0,1);
% \draw node at (-0.5,0.5){$X_2$};
% \begin{scope}[very very thick]
% \draw (1,0) -- (2,0);
% %
% %\draw (2,1/4) circle (0.25cm);
% %\draw (2,1/8) circle (0.125cm);
% %\foreach \x in {0,1-1/2,1-1/3} {
% %\draw (1,0)-- (\x,-1);
% %}
% \end{scope}
% \end{tikzpicture}
% \qquad
% \begin{tikzpicture}[scale=0.8]
% \draw[->,dashed, gray!80] (-0.5,0) -- (3,0);
% \draw[->,dashed, gray!80] (0,-1) -- (0,1);
% \draw node at (-0.5,0.5){$X_3$};
% \begin{scope}[very thick]
% %\draw (1,0) -- (2,0);
% %
% %\draw (2,1/4) circle (0.25cm);
% %\draw (2,1/8) circle (0.125cm);
% \foreach \x in {2.5,3.5-1/2,3.5-1/3} {
% \draw (3.5,0)-- (\x,-1);
% }
% \end{scope}
% \end{tikzpicture}
% \]
%%%%%%%%%%%% -1 mod 5:
\bigskip
\[
\begin{tikzpicture}[scale=0.8]
\draw[->,dashed, gray!80] (-0.5,0) -- (3,0);
\draw[->,dashed, gray!80] (0,-1) -- (0,1);
\draw node at (-0.5,0.5){$X_4$};
%%%%
\begin{scope}[very thick]
\draw (1,0) -- (2,0);
\draw (2,0.25) circle (0.25cm);
\draw (2,1/8) circle (0.125cm);
\end{scope}
\end{tikzpicture}
\qquad
\begin{tikzpicture}[scale=0.8]
\draw[->,dashed, gray!80] (-0.5,0) -- (3.5,0);
\draw[->,dashed, gray!80] (0,-1) -- (0,1);
\draw node at (-0.5,0.5){$X_9$};
%%%%
\begin{scope}[thick]
\draw (1,0) -- (2,0);
\draw (2,0.25) circle (0.25cm);
\draw (2,0) -- (3,0);
\draw (3,0.25) circle (0.25cm);
\foreach \x in {2,2.5,8/3, 3-1/4,3-1/5,3-1/6} {
\draw (3,0) -- (\x,-1);
\draw (2,1/8) circle (0.125cm);
}
\end{scope}
\end{tikzpicture}
%%%%%%%%%%%
\qquad
\begin{tikzpicture}[scale=0.8]
\draw[->,dashed, gray!80] (-0.5,0) -- (4.5,0);
\draw[->,dashed, gray!80] (0,-1) -- (0,1);
\draw node at (-0.5,0.5){$X_{14}$};
%%%%
\begin{scope}[thick]
\draw (1,0) -- (2,0);
\draw (2,0.25) circle (0.25cm);
\draw (2,0) -- (3,0);
\draw (3,0.25) circle (0.25cm);
\foreach \x in {2,2.5,8/3, 3-1/4,3-1/5,3-1/6} {
\draw (3,0) -- (\x,-1);
\draw (2,1/8) circle (0.125cm);
}
\draw (3,0) -- (4,0);
\draw (4,0.25) circle (0.25cm);
\foreach \x in {3,3.5,4-1/3, 4-1/4,4-1/5,4-1/6,4-1/7} {
\draw (4,0) -- (\x,-1);
}
\end{scope}
%%%
\end{tikzpicture}
\]
%%%
%%%%%%%%%%%%%%% 0 mod 5:
\[
\begin{tikzpicture}[scale=0.8]
\draw[->,dashed, gray!80] (-0.5,0) -- (3,0);
\draw[->,dashed, gray!80] (0,-1) -- (0,1);
\draw node at (-0.5,0.5){$X_5$};
%%%%
\begin{scope}[thick]
\draw (1,0) -- (2,0);
\draw (1,1/4) circle (0.25cm) (2,1/4) circle (0.25cm);
\draw (2,1/8) circle (0.125cm);
\end{scope}
\end{tikzpicture}
\qquad
\begin{tikzpicture}[scale=0.8]
\draw[->,dashed, gray!80] (-0.5,0) -- (3.5,0);
\draw[->,dashed, gray!80] (0,-1) -- (0,1);
\draw node at (-0.5,0.5){$X_{10}$};
%%%%
\begin{scope}[thick]
\draw (1,0) -- (2,0);
\draw (1,1/4) circle (0.25cm) (2,1/4) circle (0.25cm);
\draw (2,1/8) circle (0.125cm);
%%%%
\draw (2,0.25) circle (0.25cm);
\draw (2,0) -- (3,0);
\draw (3,0.25) circle (0.25cm);
\foreach \x in {2,2.5,8/3, 3-1/4,3-1/5,3-1/6} {
\draw (3,0) -- (\x,-1);
}
\end{scope}
\end{tikzpicture}
%%%%%%%%%%%
\qquad
\begin{tikzpicture}[scale=0.8]
\draw[->,dashed, gray!80] (-0.5,0) -- (4.5,0);
\draw[->,dashed, gray!80] (0,-1) -- (0,1);
\draw node at (-0.5,0.5){$X_{15}$};
%%%%
\begin{scope}[thick]
\draw (1,0) -- (2,0);
\draw (1,1/4) circle (0.25cm) (2,1/4) circle (0.25cm);
\draw (2,1/8) circle (0.125cm);
\draw (2,0.25) circle (0.25cm);
\draw (2,0) -- (3,0);
\draw (3,0.25) circle (0.25cm);
\foreach \x in {2,2.5,8/3, 3-1/4,3-1/5,3-1/6} {
\draw (3,0) -- (\x,-1);
}
\draw (3,0) -- (4,0);
\draw (4,0.25) circle (0.25cm);
\foreach \x in {3,3.5,4-1/3, 4-1/4,4-1/5,4-1/6,4-1/7} {
\draw (4,0) -- (\x,-1);
}
\end{scope}
%%%
\end{tikzpicture}
\]

\bigskip
By using Remark \ref{orders}, and Propositions \ref{neighbor} and \ref{neighbors}, we obtain the following result. 

\begin{theorem}\label{graphs}
For each $n$, $K(X_{n})$ has size $n$. 
\end{theorem}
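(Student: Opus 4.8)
The plan is to prove the statement by strong induction on $n$, reducing ``$K(X_n)$ has size $n$'' to a counting of homeomorphism types of points organized by a list of invariants. To each $p\in X_n$ I would attach the tuple consisting of $\mathrm{ord}(p,X_n)$ (which, by Remark \ref{orders} and the corollary following Theorem \ref{maintheorem}, already separates end points from ordinary points from ramification points, and separates ramification points of different orders), together with $\mathrm{ord}(v(p),X_n)$ when $p$ is an end point (Proposition \ref{neighbor}) and with $\Sigma(p,X_n)$ when $p$ is ordinary (Proposition \ref{neighbors}). The proof then splits into three tasks: (i) distinct tuples force $C(p,X_n)\not\approx C(q,X_n)$ (lower bound); (ii) equal tuples force $C(p,X_n)\approx C(q,X_n)$ (upper bound); and (iii) exactly $n$ tuples are realized in $X_n$.

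For the upper bound I would exhibit explicit self-homeomorphisms of $X_n$ and invoke the fact, recalled in the Preliminaries, that a homeomorphism $f\colon X_n\to X_n$ induces $C(p,X_n)\approx C(f(p),X_n)$. Three families of symmetries suffice: reparameterizing a single edge to slide an ordinary point to any other point of that edge; the graph automorphisms permuting the legs $l^{(m)}_i$ emanating from a common vertex; and the involution interchanging the two circles $C_m$ and $C$ based at the same vertex. One then checks that each realized tuple is a single orbit of the resulting symmetry group.

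The counting in (iii) is where the recursion does the work. For $n\le 8$ (simple closed curve, arc, triod, and $P_1,\dots,P_5$) I would enumerate the vertices and edges by hand. For $n\ge 9$ I would use $X_n=X_{5k+r}\cup S_{k+2}$: attaching $S_{k+2}$ at the current right-hand vertex raises the order of that vertex by exactly $1$ (so the invariants of the old points are merely shifted, not merged) and contributes exactly five genuinely new tuples, namely the new ramification vertex $(k+2,0)$, its order-$1$ leg end points, the new internal edge $I_{k+2}$, the new circle $C_{k+2}$, and the new legs $l^{(k+2)}_i$. Verifying that these five are mutually distinct and disjoint from the shifted old tuples, so that the count passes from $5k+r$ to $(5k+r)+5=n$, is routine once the order of every vertex is computed, since the new vertex order $2k+7$ and the associated $\Sigma$-values grow with $k$.

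The hard part will be that $\Sigma$ alone does not separate all ordinary points: already in $P_4=X_7$ a point on the leg and a point on a circle both have $\Sigma=5$, and in $X_{10}=P_2\cup S_3$ a point on the internal edge $I_2$ and a point on the circle $C_3$ both have $\Sigma=9$. To reach size exactly $n$ these must nonetheless be inequivalent, so I must refine the ordinary-point invariant by an \emph{edge type} $\tau(p)\in\{\text{loop},\text{leg},\text{internal}\}$ recording whether the closure $\overline{J_p}$ of the edge containing $p$ is a simple closed curve, an arc with a free end point, or an arc with both end points in $R(X_n)$. The key point, to be proved as a separate lemma, is that $\tau$ is a homeomorphism invariant: extending the argument of Proposition \ref{neighbors}, any homeomorphism $h\colon C(p,X_n)\to C(q,X_n)$ must send the distinguished element $\overline{J_p}$ to $\overline{J_q}$, and the germ of the hyperspace at that element detects $\tau$, since the inward $2$-cell $C(p,\overline{J_p})$ is attached to the outward growth cells along different boundary strata in the three cases. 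Carrying out this local analysis cleanly is the crux; the bookkeeping in (iii) is only tedious by comparison.
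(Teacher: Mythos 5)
Your plan is essentially the proof the paper intends: Theorem \ref{graphs} is stated with only a citation of Remark \ref{orders} and Propositions \ref{neighbor} and \ref{neighbors}, so the invariant tuples, the explicit self-homeomorphisms for the upper bound, and the recursive count are exactly what the reader is expected to supply. More importantly, you have correctly spotted that those three results by themselves do \emph{not} suffice: the order together with $\Sigma$ fails to separate ordinary points lying on a loop, on a leg, and on an internal edge. This clash occurs even earlier than you say --- already in $X_6=P_3$ the points of the legs $l^{(1)}_i$ and the points of $C_2\cup C$ all have order $2$ and $\Sigma=5$ --- and it recurs at every stage of the recursion (in $X_{14}$ the legs $l^{(3)}_i$ and the loop $C_4$ both have $\Sigma=11$). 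So a refinement such as your edge type $\tau$ is genuinely needed; without it the argument only bounds the size of $K(X_n)$ from above and below by numbers that do not match.

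The one real gap in your proposal is that the crux lemma --- that $\tau$ is an invariant of $C(p,X)$ --- is left as a sketch. It is true, and it can be closed along the lines you indicate. The proof of Proposition \ref{neighbors} already yields $h(\overline{J_p})=\overline{J_q}$ for the closures of the edges containing $p$ and $q$; it then suffices to compute the local dimension of $C(p,X)$ at the element $\overline{J_p}$ and compare it with $\Sigma(p,X)$. Writing $J=vw$ for the edge containing $p$: if $J$ is internal, an element Hausdorff-close to $\overline{J_p}$ is described by independent data at $v$ and at $w$ (either retract into $J$ or contain the vertex and grow into the remaining $\mathrm{ord}(v,X)-1$, respectively $\mathrm{ord}(w,X)-1$, edges), giving local dimension $\Sigma(p,X)-2$; if $J$ is a leg, the free end point contributes a single retraction parameter, giving $\Sigma(p,X)-1$; if $J$ is a loop at $v$ with $\mathrm{ord}(v,X)=m=\Sigma(p,X)$, an element close to the circle may omit a small open arc away from $v$ (two parameters for its endpoints) \emph{and simultaneously} grow into the $m-2$ other edges at $v$, giving local dimension $\Sigma(p,X)$. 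Since these three values are distinct, $\tau(p)$ is recovered from the pair $(C(p,X),\overline{J_p})$, hence from $C(p,X)$ alone. With that lemma in place your tasks (i)--(iii) go through as described; the verification that the shifted old tuples never collide is indeed routine, because the ramification points of $X_n$ all have pairwise distinct orders, so distinct internal edges, distinct leg stations, and distinct loop stations automatically receive distinct values of $\Sigma$ within each fixed type.
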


Concerning Theorem \ref{graphs}, note that for each $n$, $X_{n}$ is a $\frac{1}{n}$-ho\-mo\-ge\-ne\-ous continuum. On the other hand Example \ref{example} shows a $\frac{1}{6}$-homogeneous finite graph $X$, such that $K(X)$ has size $5$.  We will prove this fact in the next section by proving a more general result. 

By using the same ideas of the previous section, it can be proved that the continuum in the next picture has countably many topologically distinct types of $C(p,X)$.
%%%figura: 
\[
\begin{tikzpicture}[xscale=1.1, yscale=0.5]
\foreach \x in {0,1,2,3,4,5}{
\draw (\x,0) -- (\x,\x);
}
\draw (-0.1,0) node{$\bullet$} -- (5,0);
\draw node at (0.2,0.1){$\cdots$};
%%%%%%
\foreach \x in {2,...,5}{
\foreach \y in {1,2,...,\x}{
\draw (6-\x,0) -- ({(6-\x)-1/(\x)},{(1/\x)*(4/\y)});
}
}
\draw (5,0)  -- (4.5, 3.5);
\end{tikzpicture}
\]

\begin{question}
Does there exist a continuum $X$ such that the cardinality of $K(X)/\sim $ is equal to $\mathfrak c$, the cardinality of the continuum? 
\end{question}

By the arguments given above the Proposition \ref{equality},  such a continuum must have uncountably many orbits.

It is known by \cite[Theorem 9]{KennedyRogers(1986)} that the pseudocircle  $X$ has uncountably many orbits, however since it is hereditarily indecomposable each element of $K(X)$ is an arc (\cite[Lemma 3.19]{Pellicer(2003)}).
%
%The following remain open.

%\begin{question}
%Does there exist for each positive integer $n$, a $\frac{1}{n}$-ho\-mo\-ge\-neous finite graph having hyperspace $K(X)$ size smaller that $n$? 
%\end{question}
%%%%%%%%%%%%%%%%%%%%%%%%
\section{$\frac{1}{n}$-homogeneous continua with size smaller than $n$}

We have seen in the last section that for every positive integer $n$ there exist a finite graph  $X_n$ with $K(X_n)$ having size $n$. In our construction, all the examples $X_n$ are $1/n$-homogeneous. Since the homogeneity degree is an upper bound for the size of $K(X)$, it is natural to ask if it is possible to give families of $1/n$- homogeneous finites graphs $Y_n$ such that $K(Y_n)$ has size $<n$.

\begin{definition}
Given a continuum $X$ and $p,q\in X$ we say that $X$ is \textbf{pseudo--symmetric with respect to $p$ and $q$}, provided that there exists a homeomorphism $\varphi : C(p,X)\to C(q,X)$ such that $\varphi (\{p\})=\{q\}$ and $\varphi(X)=X$.
\end{definition}

As examples of pseudo--symmetric continua we have the following:
\begin{itemize}
\item The continuum $X$ in Example \ref{example} is pseudo--symmetric with respect to the end points $(1,1)$ and $(1,-1)$. 
\item Each homogeneous continuum is pseudo--symmetric with respect to each pair of its points. 
\item By \cite[Lemma 3.19]{Pellicer(2003)} every hereditarily indecomposable continuum is pseudo--symmetric with respect to each pair of its points. 

\end{itemize}

\begin{theorem}\label{pseudosymmetric}
Let $Y$ be a pseudo--symmetric continuum with respect to $p$ and $q$. Let $X=L\cup Y\cup K$, where $L$ and $K$ are continua such that $L\cap Y=\{p\}$, $K\cap Y=\{q\}$ and $K\cap L=\emptyset$. Suppose that there exists a homeomorphism $f:C(p,L)\to C(p,K)$ such that $f(\{p\})=\{q\}$ and $f(L)=K$. Then $C(p,X)$ is homeomorphic to $C(q,X)$. 
\end{theorem}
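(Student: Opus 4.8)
The plan is to exploit the two one-point unions $X=L\cup(Y\cup K)$ and $X=K\cup(Y\cup L)$, whose common points are $p$ and $q$ respectively, to factor the two hyperspaces and then transport one factorization onto the other by means of $\varphi$ and $f$. First I would record the standard \emph{wedge factorization}: if a continuum $Z$ is written as $Z=M_1\cup M_2$ with $M_1\cap M_2=\{z\}$, then for every $A\in C(z,Z)$ both $A\cap M_1$ and $A\cap M_2$ are subcontinua containing $z$, and the union map $(B_1,B_2)\mapsto B_1\cup B_2$ is a homeomorphism $C(z,M_1)\times C(z,M_2)\to C(z,Z)$ (it is a continuous bijection from a compactum onto a Hausdorff space, its inverse being $A\mapsto(A\cap M_1,A\cap M_2)$; here one uses that $A$ cannot be disconnected through the single gluing point $z$). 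Applying this at $p$ to $X=L\cup(Y\cup K)$ and at $q$ to $X=K\cup(Y\cup L)$ gives $C(p,X)\approx C(p,L)\times C(p,Y\cup K)$ and $C(q,X)\approx C(q,K)\times C(q,Y\cup L)$. Since $f$ identifies $C(p,L)$ with $C(q,K)$ (its conditions $f(\{p\})=\{q\}$ and $f(L)=K$ pin down the basepoint and the top element), the theorem reduces to producing a homeomorphism $C(p,Y\cup K)\approx C(q,Y\cup L)$.

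Next I would describe $C(p,Y\cup K)$ concretely. For $B\in C(p,Y\cup K)$ the set $B\cap Y$ is a subcontinuum of $Y$ containing $p$, and, because $K$ meets $Y$ only at $q$, one has $B\cap K=\emptyset$ when $q\notin B$ and $B\cap K\in C(q,K)$ when $q\in B$; moreover $q\in B$ if and only if $q\in B\cap Y$. With the convention $f^{-1}(\emptyset)=\emptyset$ I would then define
\[
\Psi\colon C(p,Y\cup K)\to C(q,Y\cup L),\qquad \Psi(B)=\varphi(B\cap Y)\cup f^{-1}(B\cap K),
\]
whose candidate inverse is the symmetric map $D\mapsto\varphi^{-1}(D\cap Y)\cup f(D\cap L)$. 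Continuity of $\Psi$ follows from continuity of the union operation together with continuity of the restriction maps $B\mapsto B\cap Y$ and $B\mapsto B\cap K$ (which hold precisely because $Y\cup K$ is a one-point union at $q$). As $\Psi$ is then a continuous bijection from a compactum to a Hausdorff space, it is a homeomorphism, and combined with the first paragraph this yields $C(p,X)\approx C(q,X)$.

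The delicate point — and the step I expect to be the main obstacle — is the well-definedness and bijectivity of $\Psi$. For $\Psi(B)$ to be a subcontinuum of $Y\cup L$ containing $q$, the piece $f^{-1}(B\cap K)\in C(p,L)$ must glue to $\varphi(B\cap Y)$ at the point $p$, which forces $p\in\varphi(B\cap Y)$ exactly when $B\cap K\neq\emptyset$, i.e. exactly when $q\in B\cap Y$. Thus everything hinges on the compatibility
\[
q\in C\iff p\in\varphi(C)\qquad(C\in C(p,Y)),
\]
equivalently on $\varphi$ carrying the family $\mathcal F=\{C\in C(Y):p,q\in C\}=C(p,Y)\cap C(q,Y)$ onto itself. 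This is where the pseudo-symmetry hypothesis, and specifically the normalizations $\varphi(\{p\})=\{q\}$ and $\varphi(Y)=Y$, must genuinely be used.

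I would emphasize that $\varphi(\mathcal F)=\mathcal F$ is \emph{not} automatic for an arbitrary admissible $\varphi$: when $Y$ is a simple closed curve, the rotation carrying $p$ to $q$ is an admissible homeomorphism of $C(p,Y)$ onto $C(q,Y)$ fixing $\{p\}\mapsto\{q\}$ and $Y\mapsto Y$, yet it moves $\mathcal F$. Consequently the real work of the proof is to produce an admissible homeomorphism witnessing pseudo-symmetry that \emph{does} preserve $\mathcal F$ (in the circle example, the reflection interchanging $p$ and $q$ does so). I would therefore either strengthen the use of the hypothesis to extract such a $\varphi$, or replace a given $\varphi$ by $g\circ\varphi$ for a self-homeomorphism $g$ of $C(q,Y)$ carrying $\varphi(\mathcal F)$ back onto $\mathcal F$, the existence of such a $g$ being the crux to be verified. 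Once $\varphi(\mathcal F)=\mathcal F$ is secured, the checks in the second paragraph become routine and the proof concludes.
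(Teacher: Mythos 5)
Your wedge factorization $C(p,X)\approx C(p,L)\times C(p,Y\cup K)$ and $C(q,X)\approx C(q,K)\times C(q,Y\cup L)$ is correct (and is essentially the same device as the paper's map $g(A)=(A\cap L,A\cap K)$), and the reduction via $f$ to producing a homeomorphism $C(p,Y\cup K)\to C(q,Y\cup L)$ is sound. The problem is that you never complete that remaining step. As you yourself observe, $\Psi(B)=\varphi(B\cap Y)\cup f^{-1}(B\cap K)$ is well defined only if $\varphi$ satisfies $q\in C\Leftrightarrow p\in\varphi(C)$ for every $C\in C(p,Y)$, because $f^{-1}(B\cap K)$ lies in $L$, which meets $Y$ only at $p$, so it must attach to $\varphi(B\cap Y)$ at $p$; and the candidate inverse $D\mapsto\varphi^{-1}(D\cap Y)\cup f(D\cap L)$ forces the converse implication for the same reason. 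This compatibility is genuinely stronger than pseudo--symmetry (your rotation example on the circle exhibits an admissible $\varphi$ that violates it), and you neither derive it from the hypotheses nor construct the correcting self-homeomorphism $g$ whose existence you explicitly defer. What you have is a correct reduction plus a clearly identified but unfilled hole; as submitted it is not a proof.

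For comparison, the paper routes around this particular obstruction by transporting in the opposite direction: for $A\in C(p,L\cup Y)$ it sets $h(A)=\varphi(A\cap Y)\cup f(A\cap L)$, so the transported piece lies in $K$ and attaches at $q$, which belongs to $\varphi(A\cap Y)$ automatically; and for continua reaching into $K$ it only treats those with $Y\subseteq A$, where $\varphi(A\cap Y)=\varphi(Y)=Y$ contains $p$ and the attachment of $f^{-1}(A\cap K)$ is again automatic. Note, however, that these two cases exhaust $C(p,X)$ only when every subcontinuum of $Y$ containing both $p$ and $q$ equals $Y$; for a general pseudo--symmetric $Y$ (a simple closed curve, say) there are elements of $C(p,X)$ meeting $K\setminus\{q\}$ without containing $Y$, and on those the very compatibility you isolated resurfaces. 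So the condition $\varphi\bigl(C(p,Y)\cap C(q,Y)\bigr)=C(p,Y)\cap C(q,Y)$ is the true crux of the theorem: a complete argument must either show that a compatible $\varphi$ can always be chosen or build that compatibility into the hypothesis of pseudo--symmetry.
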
 

\begin{proof}
Since $K$ and $L$ are disjoint continua, $L\cap Y=\{p\}$ and $K\cap Y=\{q\}$, then $p$ and $q$ are cut points of $X$. Moreover $p$ is a cut point of $L\cup Y$ and $q$ is a cut point of $K\cup Y$. Thus it is easy to see that the function $g: C(Y,X)\to C(p,L)\times C(q,K)$ defined as $g(A)= (A\cap L, A\cap K)$, for each $A\in C(Y,X)$ is a homeomorphism.

Since $Y$ is pseudo--symmetric with respect to $p$ and $q$, there exists a homeomorphism $\varphi : C(p,Y)\to C(q,Y)$ such that $\varphi (\{p\})=\{q\}$ and $C(Y)=Y$.

Let $h:C(p,X)\rightarrow C(q,X)$ be defined as 
\[
h(A) =\begin{cases}
\varphi(A\cap Y)\cup f(A\cap L), & \text{if  } A\in C(p,L\cup Y); \\
g^{-1}(f^{-1}(A\cap K), f(A\cap L)), & \text{if  } A\in C(Y,X).
\end{cases}
\]
Note that if $A\in C(p,L\cup Y)\cap C(Y,X)$ then $A\cap K=\{q\}$, and $A\cap Y=Y$. Thus
\begin{align*}
g^{-1}(f^{-1}(A\cap K), f(A\cap L)) & =g^{-1}(f^{-1}(\{q\}), f(A\cap L))\\
&=Y\cup f(A\cap L)=\varphi(A\cap Y)\cup f(A\cap L).
\end{align*}
Hence, $h$ is well-defined and since $\varphi$, $f$ and $g$ are homeomorphisms, we conclude that $h$ is a homeomorphism.

%Let $J=J_1\cup J_2$. Since $C(p,J)$ and $C(q,S)$ are homeomorphic to a $2$-cell, and all of $\{p\}, J, \{q\}$ and $S$ belong to the respective boundary (as a manifold), there exists a homeomorphim $\psi :C(p,J)\to C(q, S)$ such that $\psi (\{p\})=\{q\}$ and $\psi (J)=S$. 
%Since $J\cap Y =\{p\}$ and $S\cap Y=\{q\}$ it is easy to see that the function $f: C(Y,X)\to C(p,J)\times C(q,S)$ defined as $f(A)= (A\cap J, A\cap S)$ is a homeomorphism.
%
%Since $Y$ is pseudo--symmetric with respect to $p$ and $q$, there exists a homeomorphism $\varphi : C(p,Y)\to C(q,Y)$ such that $\varphi (\{p\})=\{q\}$ and $C(Y)=Y$. 
%
%Let $h:C(p,X)\rightarrow C(q,X)$ be defined as 
%\[
%h(A) =\begin{cases}
%\psi(A\cap J)\cup \varphi(A\cap Y), & \text{if  } A\in C(p,J\cup Y); \\
%f^{-1}(\psi^{-1}(A\cap S), \psi(A\cap J)), & \text{if  } Y\in C(Y,X).
%\end{cases}
%\]
%Note that if $A\in C(p,J\cup Y)\cap C(Y,X)$ then $A\cap S=\{q\}$, and 
%\begin{align*}
%f^{-1}(\psi^{-1}(A\cap S), \psi(A\cap J)) & =h^{-1}(\psi^{-1}(\{q\}), \psi(A\cap J))\\
%&=Y\cup \psi(A\cap J)=\varphi(A\cap Y)\cup \psi(A\cap J).
%\end{align*}
%Since $\psi, f$ and $\varphi$ are homeomorphism, we conclude that $h$ is a homeomorphism.
%
%The second part of the theorem is trivial.
\end{proof}
%%%%%%%%%%%%%%%%%%%%%%%%%%%%%%%%%%%%%%

Using the notation of the previous theorem, if we suppose that in addition $X$ is $\frac{1}{n}$-homogeneous and, $p$ and $q$ are in different orbits under the accion of the homeomorphism group, then $K(X)$ has size smaller than $n$. As a consequence, it is easy to see that the continuum $X$ in Example \ref{example}, is pseudo--symmetric with respect to $p$ and $q$.

Next we will construct a familly of finite graphs with hyperspace $K(X)$ having size $n$ and homogeneity degree greater than $n$. The main idea is to use the Theorem \ref{pseudosymmetric} pasting
 two disjoint continua $L$ and $K$ in a pseudo--symmetric continuum $X$ in points $p$ and $q$, respectively, such that $C(p,L)$ and $C(q,K)$ are homeomorphic, but in such a way that there is no homeomorphism between $L$ and $K$ sending $p$ to $q$. In order to do this, observe that each one of the graphs $P_{i}$ given in the last section are pseudo--symmetric with respect to the points $(2,\frac{1}{2})$ and $(2,\frac{1}{4})$, $i\in\{1,2,3,4,5\}$. We will make a modification to these graphs, attaching an arc and a circunference in the points $(2,\frac{1}{4})$ and $(2,\frac{1}{2})$, respectively, getting  the new graphs $Q_{i}$, wich are illustrated below:

\[
\begin{tikzpicture}[scale=1.3]
\draw[->,dashed, gray!80] (-0.3,0) -- (2.5,0);
\draw[->,dashed, gray!80] (0,-0.5) -- (0,0.5);
\draw (0,0) node[above left]{$Q_1$};
%%%%
\begin{scope}[thick]
\draw (1,0) -- (2,0);
\draw (1.8,0.25) -- (2.2,0.25);
\draw (2,0.25) circle (0.25cm);
\draw (2,0.75) circle (0.25cm);
\draw (2,1/8) circle (0.125cm);
\end{scope}
\end{tikzpicture}
%%%%
\hspace{0.7cm}
\begin{tikzpicture}[scale=1.3]
\draw[->,dashed, gray!80] (-0.3,0) -- (2.5,0);
\draw[->,dashed, gray!80] (0,-0.5) -- (0,0.5);
\draw (0,0) node[above left]{$Q_2$};
\begin{scope}[thick]
\draw (1,0) -- (2,0);
\draw (1.8,0.25) -- (2.2,0.25);
\draw (1,1/4) circle (0.25cm) (2,1/4) circle (0.25cm);
\draw (2,1/8) circle (0.125cm);
\draw (2,0.75) circle (0.25cm);
\end{scope}
\end{tikzpicture}
\hspace{0.7cm}
\begin{tikzpicture}[scale=1.3]
\draw[->,dashed, gray!80] (-0.3,0) -- (2.5,0);
\draw[->,dashed, gray!80] (0,-0.5) -- (0,0.5);
\draw (0,0) node[above left]{$Q_3$};
\begin{scope}[thick]
\draw (1,0) -- (2,0);
\draw (1.8,0.25) -- (2.2,0.25);
\draw (2,1/4) circle (0.25cm);
\draw (2,1/8) circle (0.125cm);
\draw (2,0.75) circle (0.25cm);
\foreach \x in {0,1-1/2,1-1/3} {
\draw (1,0)-- (\x,-1);
}
\end{scope}
\end{tikzpicture}
%
%%
%%%%%%%%%%%%%%%%%%%%%%%%%%%5
\]

\[
%
%\hspace{0.5cm}
\begin{tikzpicture}[scale=1.5]
\draw[->,dashed, gray!80] (-0.3,0) -- (2.5,0);
\draw[->,dashed, gray!80] (0,-0.5) -- (0,0.5);
\draw (0,0) node[above left]{$Q_4$};
\begin{scope}[thick]
\draw (1,0) -- (2,0);
\draw (1.8,0.25) -- (2.2,0.25);
\draw (1,1/4) circle (0.25cm);
\draw (2,1/4) circle (0.25cm);
\draw (2,0.75) circle (0.25cm);
\draw (1,0)-- (0,-1);
\draw (2,1/8) circle (0.125cm);
\end{scope}
\end{tikzpicture}
\hspace{1cm}
\begin{tikzpicture}[scale=1.5]
\draw[->,dashed, gray!80] (-0.3,0) -- (2.5,0);
\draw[->,dashed, gray!80] (0,-0.5) -- (0,0.5);
\draw (0,0) node[above left]{$Q_5$};
\begin{scope}[thick]
\draw (1,0) -- (2,0);
\draw (1.8,0.25) -- (2.2,0.25);
%\draw (1,1/4) circle (0.25cm);
\draw (2,1/4) circle (0.25cm);
\draw (2,0.75) circle (0.25cm);
\draw (1,0)-- (0,-1) (1,0) -- (1/2,-1);
\draw (2,0)-- (1,-1) (2,0) -- (2-1/2,-1);
\draw (2,1/8) circle (0.125cm);
\end{scope}
\end{tikzpicture}
\]

Similar to the construction of the graphs $X_{n}$,  by using the same symbols as in the last section, we define for each $n\geq 4$, a finite graph $Y_{n}$ as follows:
\begin{enumerate}
\item $Y_{i+3}=Q_i$ for $i\in \{1,2,3,4,5\}$, 
\item for each $n\geq 9$, we write $n$ as $n=5(k+1)+r$, where $r\in \{-1,0,1,2,3\}$, and define 
$$Y_{n}=Y_{5k+r}\cup S_{k+2}.$$
\end{enumerate}
We consider furthermore the following finite graphs:

\[
\begin{tikzpicture}[scale=1.5]
% \draw[->,dashed, gray!80] (-0.3,0) -- (2.5,0);
% \draw[->,dashed, gray!80] (0,-0.5) -- (0,0.5);
\draw (1,0) node[above left]{$Y_1$};
%%%%
\begin{scope}[thick, xshift=-2]
\draw (1,0) -- (2,0);
\draw (2.25,0) circle (0.25cm);
\draw (1,-.5) -- (1,.5);
\end{scope}
\end{tikzpicture}
%%%%
\hspace{1cm}
\begin{tikzpicture}[scale=1.5]
% \draw[->,dashed, gray!80] (-0.3,0) -- (2.5,0);
% \draw[->,dashed, gray!80] (0,-0.5) -- (0,0.5);
\draw (0.9,0) node[above left]{$Y_2$};
\begin{scope}[thick, xshift=-1]
\draw (2.25,0) circle (0.25cm);
\draw (1,-.5) -- (1,.5);
\draw (1.25,0) circle (0.25cm);
\draw (1.75,0) circle (0.25cm);
\end{scope}
\end{tikzpicture}
\hspace{1cm}
\begin{tikzpicture}[scale=1.5]
% \draw[->,dashed, gray!80] (-0.3,0) -- (2.5,0);
% \draw[->,dashed, gray!80] (0,-0.5) -- (0,0.5);
\draw (0.5,0) node[above left]{$Y_3$};
\begin{scope}[thick,xshift=-1]
\draw (1,0) -- (1.5,0);
\draw (.5,-.5) -- (.5, .5);
\draw (2.25,0) circle (0.25cm);
\draw (.75,0) circle (0.25cm);
\draw (1.75,0) circle (0.25cm);
\end{scope}
\end{tikzpicture}
\]

By using Theorem \ref{pseudosymmetric}, we have that for each $n$, $Y_{n}$ is a $\frac{1}{n+5}$-homogeneous finite graph such that  $K(Y_{n})$ has size $n+4$. By this and Corollary \ref{simpleclosedcurve}, it remains to solve the following question.

\begin{question}
If $n\in\{2,3,4\}$, does there exists a $\frac{1}{n+1}$-homogeneous finite graph, $X$, such that $K(X)$ has size $n$?
\end{question}

%
%\[
%\begin{tikzpicture}[thick, scale=.7]
%\draw (-1,0) node[below left]{p} node{\tiny $\bullet$} -- (1,0) node[below left]{q} node{\tiny $\bullet$} 
%  (0,0)--(0.1,1) (-1,-0.5)-- (-1,0.5); 
%  \draw (1.5,0) circle (0.5cm);
%\end{tikzpicture}
%%%%
%\hspace{2cm}
%\begin{tikzpicture}[thick, scale=.7]
%\draw (1.5,0) circle (0.5cm);
%\draw (-1,0.5) circle (0.5cm);
%\draw (-1,0) node[below right]{p} node{\tiny $\bullet$} -- (1,0);
%\draw (0,-0.5) -- (0,0.5);
%\draw node at (0,0){\tiny $\bullet$}; \draw[below right] node at (0,0){q};
%\draw (-1,0) -- (-1,-0.7) (-1.5,-0.7) -- (-0.5,-0.7);
%\end{tikzpicture}
%\]
%%%%%%%%%%%%%%%%%%%%%
%
%\[
%\begin{tikzpicture}[thick, scale=.7]
%\draw (-1,0.5) circle (0.5cm);
%\draw (-1,0) -- (1,0) (-1,0) -- (-1.5,-0.7);
%\foreach \x/\y in {1.5/0.8,2/0,2/-1, 1.3/-1.4, 0/-1}{
%  \draw (1,0) -- (\x,\y);
%}
%\draw[shift={(2.3,-1.4)}] (0,0) circle (0.5cm);
%\draw (2,-1) node{\tiny $\bullet$} node[above]{q};
%\draw (0,-1) node{\tiny $\bullet$} node[below]{p};
%\draw (-0.5,-1) -- (0.5,-1);
%\end{tikzpicture} 
%%%%
%\hspace{2cm}
%\begin{tikzpicture}[thick, scale=.7]
%\draw (-1,0) -- (1,0) (-1,0) -- (-1.5,-0.7) (-1,0)--(-1.5,0.7);
%\foreach \x/\y in {2/-1, 1.3/-1, 0/-1}{
%  \draw (1,0) -- (\x,\y);
%}
%\draw[shift={(2.3,-1.4)}] (0,0) circle (0.5cm);
%\draw (1,0.5) circle (0.5cm);
%\draw (2,-1) node{\tiny $\bullet$} node[above]{q};
%\draw (0,-1) node{\tiny $\bullet$} node[below]{p};
%\draw (-0.5,-0.5) -- (0.5,-1.5);
%\end{tikzpicture} 
%\]

We conclude this paper with the following natural problem, which is still open.
%%%%%%%%%%%%%%%%
\begin{problem}
Characterize the class of finite graphs $X$, such that $X$ is $\frac{1}{n}$-homogeneous and $K(X)$ has size $n$, where $n$ is a positive integer.
\end{problem}

\textbf{Acknowledgments.}  All authors wish to thank the support given by FCFM-PFCE-2017-SEP and the third author thanks SEP for the support given by \emph{Apoyo a la Incorporaci\'on de Nuevos PTC} SEP/23-005/196387.\\
The authors are thankful to the referee for his/her useful suggestions, that improve the paper.

%%%%%%%%%%%%%%%%%%%%%%%%%%%%%%

\end{document}